\documentclass[12pt]{article}
\usepackage{mystyle}
\usepackage{amsfonts, amsmath, amssymb, amsthm}
\usepackage[T1]{fontenc}
\usepackage{amsmath,amsthm,amssymb,amsfonts}
\usepackage{hyperref}
\usepackage{breqn}
\usepackage{epsfig,color,graphicx,enumerate}

\usepackage{accents}
\usepackage{tabularx}
\numberwithin{equation}{section}
\newtheorem{teo}{Theorem}
\newtheorem{lemma}[teo]{Lemma}

\newtheorem{prop}[teo]{Proposition}
\newtheorem{deff}[teo]{Definition}
\theoremstyle{remark}
\newtheorem{rmk}[teo]{Remark}

\usepackage{wrapfig}
\usepackage{tcolorbox} 
%\usepackage{showkeys}
%% Times is a fairly space-saving font 
\usepackage{times}
\usepackage{cite}
\usepackage{color,calc}

% set page format suitable for 8.5 by 11 paper

\setlength{\oddsidemargin}{0in}
\setlength{\evensidemargin}{0in}
\setlength{\textwidth}{6.5in}
\setlength{\topmargin}{-0.5in}
\setlength{\textheight}{9.0in}
\parindent0mm
\parskip5pt

\newcommand{\mip}[1]{\textcolor{green!40!black}{#1}}

\usepackage{color}
\usepackage{dsfont}
\usepackage[T1]{fontenc}
\usepackage{authblk}
\usepackage{graphicx,tikz,xcolor}
\usepackage{xcolor,eucal,enumerate,mathrsfs}
\usepackage{epsfig} %,bbm
\usepackage{cancel}
\usepackage{setspace}

\usepackage{mystyle}

%\font\script=rsfs10 at 11pt

\def\I{\mathcal{I}}

\def\N{\mathbb N}
\def\R{\mathbb R}
\def\Z{\mathbb Z}

\def\P{\mathcal{P}}

\newcommand{\prob}{\mathcal{P}}

%%% Augusto symbols
\def\Vee{{V}_{ee}}

\def\WN{\mathcal{W}^{N}}

\def\Te{T}
\def\Vne{{V}_{{\rm ne}}}
\def\vne{v_{{\rm ne}}}
\def\Sym{\mathfrak{S}}

\def\VSCE {\operatorname{V}_{ee}^{\operatorname{SCE}}}

%%% add for the introduction
\def\Vee{V_{ee}}
\def\Te{T}
\def\Vne{V_{ne}}

\def\a{\alpha}
\def\b{\beta}
\def\rae{r^{(\eta)}_\a}
\def\rbe{r^{(\eta)}_\b}
\def\na{{N_\alpha}}
\def\nb{{N_\beta}}
\def\ra{\rho_\alpha}
\def\rb{\rho_\beta}

\def\re{\rho_\eta}

\def\HK{\mathsf{HK}}
\def\Vee{\operatorname{V}_{ee}}

\newcommand{\OT}[2]{\mathsf{OT}_{#1}[#2]}
\newcommand{\EOT}[2]{{\mathsf{EOT}^{\epsilon}_{#1}[#2]}}
\newcommand{\Dep}[2]{{\mathsf{D}^{\epsilon}_{#1}[#2]}}

\newcommand{\bee}[1]{\begin{equation}\label{#1}}
\newcommand{\eee}{\end{equation}}

\def\bal{\begin{aligned}}
\def\eal{\end{aligned}}

\def\XXint#1#2#3{{\setbox0=\hbox{$#1{#2#3}{\int}$} 
  \vcenter{\hbox{$#2#3$}}\kern-.5\wd0}}

%\usepackage{tabu,longtable} % outdated, replace by tabularray
%\usepackage{tabularray}

%opening

%Commenting macro
\usepackage{color}
\usepackage{pstricks}
\usepackage{ifthen}
\newrgbcolor{mirceacolor}{.1 .4 .1}
\newrgbcolor{mirceacommentcolor}{.8 0 0}
\newrgbcolor{augustocolor}{.1 .1 .7}
\newrgbcolor{adolfocolor}{.1 .7 .1}
\usepackage{csquotes}

% Set it to true to get the comments, false to remove them!
\newboolean{DEBUG}
\setboolean{DEBUG}{true}

{}

\ifthenelse {\boolean{DEBUG}}
{}

\ifthenelse {\boolean{DEBUG}}
{}

\title{Existence and uniqueness of Monge minimizers for a Multi-marginal Optimal Transport problem with intermolecular interactions cost}

\author[1,2,3]{Augusto Gerolin}
\author[4,5]{Mircea Petrache}
\author[1]{Adolfo Vargas-Jim\'enez}
\affil[1]{Department of Mathematics and Statistics, University of Ottawa}
\affil[2]{Department of Chemistry and Biomolecular Sciences, University of Ottawa}
\affil[3]{Nexus for Quantum Technologies, University of Ottawa}
\affil[4]{Department of Mathematics, Pontificia Universidad Cat\'olica de Chile}
\affil[5]{Institute for Mathematical and Computational Engineering, Pontificia Universidad Cat\'olica de Chile}

\begin{document}
\maketitle

\begin{abstract}
\noindent
We investigate a new multi-marginal optimal transport problem arising from a dissociation model in the Strong Interaction Limit of Density Functional Theory. In this short note, we introduce such  dissociation model, the corresponding optimal transport problem as well as show preliminary results on the existence and uniqueness of Monge solutions assuming absolute continuity of at least two of the marginals. Finally, we show that such marginal regularity conditions are necessary for the existence of an unique Monge solution.
\end{abstract}

\section{Introduction}

In this work, we introduce the following variational problem, motivated by Density Functional Theory:
\begin{equation}\label{maincost}
\inf_{\gamma\in\Pi_{\na}(\ra)\otimes\Pi_{\nb}(\rb)}\int_{\R^{d(\na+\nb)}}\sum^{\na}_{i=1}\sum^{\nb}_{j=1}x_i\cdot y_j^{*}d\gamma(x_1,\dots,x_\na,y_{1},\dots,y_\nb),
\end{equation}
where $y_j^{*}=(-2y_j^{1}, y_j^{2}, \ldots, y_j^{d})$ for each $j\in \lbrace 1,\dots,\nb\rbrace$,
$\na,\nb\in\N$ are integers, $\ra$ and $\rb$ are probability measures, and $\Pi_{\na}(\ra)\otimes\Pi_{\nb}(\rb)$ denotes the set of transport plans $\gamma=\gamma_{\alpha}\otimes\gamma_{\beta}$ such that $\gamma_{\alpha}$ has $\na$ marginals equal to $\ra$ and $\gamma_{\beta}$ has $\nb$ marginals equal to $\rb$. 

The multi-marginal optimal transport problem \eqref{maincost} thus has an attractive harmonic cost in the direction $x_1\in \R$ and repulsive harmonic cost in all other space directions $(x_2,\dots,x_d)\in \R^{d-1}$. The problem with fully attractive or repulsive harmonic cost have been considered, e.g. in \cite{GanSwi-CPAM-98,GerKauRaj-SIAMJMA-19,DMaGerNen-TOOAS-17,GerThesis}.

The analysis of \eqref{maincost} naturally appears in the study of the dissociation energy in a Density Functional Theory framework (DFT) \cite{KooGor-JPCL-19,FriGerGor-arxiv-22}. The dissociation energy is the energy required to break apart a chemical bond and translate the constituent atoms or molecules to an infinitely large distance from each other.

Our mathematical analysis focuses on the asymptotics of the problem and minimizers, in the limit when the ground-state energy of a cluster of interacting molecules $\alpha\beta$ becomes infinitely separated into individual clusters $\alpha$ and $\beta$. In our model, the probabilities $\ra$ and $\rb$ in \eqref{maincost} are the single-particle densities of the isolated systems $\alpha$ and $\beta$. Such dissociation limit is of significant importance in the study of molecular properties and chemical reactions, such as energy transfer, bond strength, stability of compounds, atomic and molecular spectroscopy.\vspace{2mm}

\noindent
\textbf{Multi-marginal Optimal Transport theory:}
In the multi-marginal optimal transport, we correlate a finite number of probability measures to minimize some notion of overall cost. Formally, for given Borel probability measures $\rho_1, \ldots, \rho_N$  on open sets $X_i \subseteq \R^{d}$, $i=1, \ldots, N$ (respectively), and $c$ a given cost function on the product space $X:=\prod_{i=1}^{N}X_i$, one seeks to minimize the \textit{total cost}
 \begin{equation}\label{KP}
\displaystyle \int_{X} c(x_{1},\ldots, x_{N})d\gamma,\tag{KP}
 \end{equation}
 among all Borel probability measures $\gamma$ on $X$  whose marginals are the $\rho_i$. We say that $\rho_i$ is the $i$th marginal of $\gamma$ if, for any Borel set $A\subseteq X_i$, we have $\gamma(X_{1}\times\ldots \times X_{i-1}\times A \times X_{i+1}\times \ldots \times X_{N})=\rho_{i}(A)$. This formulation is known as the Kantorovich Problem (KP) and it is, in fact, a relaxation of a restricted problem: the Monge Problem (MP).  
 
 In the Monge formulation, one seeks to minimize
 \begin{equation}\label{MP}
 \displaystyle \int_{X_{1}} c(x_{1},T_{2}x_{1},\ldots, T_{N}x_{1})d\rho_{1},\tag{MP}
 \end{equation}
 among all $(N-1)$-tuples of maps $(T_{2}, \ldots, T_{N})$ such that  $(T_{i})_{\sharp} \rho_{1}=\rho_{i}$ for all $i=2, \ldots, N$, where $(T_{i})_{\sharp} \rho_{1}$ denotes the \textit{image measure} of $\rho_{1}$ through $T_{i}$, defined by $(T_{i})_{\sharp} \rho_{1}(A) = \rho_{1}(T_{i}^{-1}(A))$, for any Borel set $A\subseteq X_{i}$.
 \par
 When $N=2$, (KP) and (MP) reduce respectively  to the  Kantorovich and Monge formulations of the classical optimal transport problem. This case is well understood; in particular, if the map $x_{2}\mapsto D_{x_{1}}c(x_{1},x_{2})$ is injective for each fixed $x_1$ and $\rho_1$ is absolutely continuous with respect to the $d$-dimensional Lebesgue measure $\mathcal{L}^{d}$, there exists a unique solution to (KP) and it is induced by a measurable map \cite{Brenier89, McCann01, San-Book-15,Vil-BOOK-03}. 
 
Under the same regularity condition on the first marginal $\rho_1$, Kim and Pass \cite{KimPass2014} extended this result to the multi-marginal case covering a wide class of cost functions \cite{Carlier2010, GanSwi-CPAM-98, Hei-CRMASP-02, KimPass2015, Pass2015, Pas-DCDS-14}, namely the cost function satisfies the so-called  \textit{twist condition on $c$-splitting sets} \cite{fathi2010optimal,champion2014twist}, i.e.  the mapping
$$(x_{2}, \ldots, x_{N})\mapsto D_{x_{1}}c(x_{1}^{0},x_2,\ldots, x_{N}) $$ is injective on
the subset of $S$ where $D_{x_1}c(x_1^{0}, x_2, \ldots, x_N)$ exists, for each fixed $x_1^{0}\in X_1$ and $c$-splitting set $S\subseteq \{x_1^{0}\}\times X_2\times \ldots X_N$. See Definition  \ref{splittingsets}.

Although being fairly general, the \textit{twist condition on $c$-splitting sets} does not hold for many costs (e.g., cyclic Euler cost \cite{Brenier89}), or is very difficulty to be verified (e.g., Coulomb cost), see \cite{PassVar2021,PassVar2022,PassVar2023} for details and further examples. When $N_{\alpha}>1$,  the cost in \eqref{maincost} is not twisted on splitting sets and an alternative approach must be used. In that case, Pass$\&$Vargaz-Jim\'enez \cite{PassVar2023} show that extra regularity conditions on some of the marginals (in addition to a regularity condition on $\rho_1$) are sufficient to guarantee uniqueness of Monge solution.\vspace{2mm}

%Moreover, that condition is sufficient but not necessary for the existence of unique Monge solutions. Indeed, some cost functions (e.g., Euler cost \cite{Brenier89}) have unique Monge solutions but the twist condition on $c$-splitting sets does not hold, 

%some classes of cost function there exists fu can still find many costs providing unique Monge type solutions and not being twisted on $c$-splitting sets.

\noindent
\textbf{Main result and proof strategy:} In this paper, we show that, under some regularity conditions on some of the marginals, the multi-marginal optimal transport problem \eqref{maincost} admits a unique Monge solution, see Theorem \ref{Eqn:10}. The general case $\na>1$ requires the use of a more general approach developed by Pass and Vargas-Jim\'enez \cite{PassVar2022}, which generalizes the twist on $c$-splitting sets condition. Roughly speaking, Pass$\&$Vargaz-Jim\'enez condition requires the mapping $(x_2, \ldots, x_N)\mapsto D_{x_1}c(x_1, x_2, \ldots, x_N)$ to be injective on special subsets generated by $c$-splitting functions, see Definition \ref{Art3:10} for details. In particular, this guarantees that every solution to (KP) is concentrated on a graph of a measurable map, and therefore, by simply using a standard argument uniqueness of Monge solution for \eqref{maincost} is obtained. Finally, Lemma \ref{Eqn:15} shows that such regularity conditions on the marginals are necessary for the uniqueness of Monge solutions.

\vspace{2mm}

%Our main result (see Theorem \ref{Eqn:10}) states that under at most two regularity conditions on the marginals, the Multi-marginal Optimal Transport problem (\ref{maincost}) admits a unique Monge solution. Essentially,  we will prove that our main cost in \eqref{maincost} satisfies the condition in Definition \ref{Art3:10}.

\noindent
\textbf{Organization of the paper:} In section \ref{sec:dis}, we define the dissociation energy (as defined in \eqref{eq:dissen})  of many-electron quantum systems as well as the corresponding energy in the so-called Strong Interaction limit of Density Functional Theory. In section \ref{sec:dispersionSCE}, we introduce the mathematical framework and compute the asymptotic development of the multi-marginal optimal transport problem with Coulomb cost in the dissociation limit. Finally, in section \ref{sec:MOT} we prove preliminary results on the existence and uniqueness of Monge solutions for the  problem \eqref{maincost}.\vspace{2mm}
\noindent

%\augusto{Can you write something here?} {\red [Adolfo: see Def 1 and 2, Remark 3 and the first paragraph of page 5.]}.

\section{Dissociation energy of many-electrons quantum systems}\label{sec:dis}

We consider a quantum mechanical system of $N=\na+\nb$ non-relativistic electrons (of mass $m_e$ and charge $-e$), representing clusters of molecules $\alpha$ and $\beta$ respectively, interacting with each other, moving around classical nuclei with positions $\mathbf{R}_1,\dots,\mathbf{R}_M \in \R^d$ and charges $Z_1e,\dots,Z_Me$ (Born-Oppenheimer approximation). 

The electrons are described by a wave function $\Psi:(\R^d\times \Z_2)^N\to\mathbb{C}$ of $N$ positions $z \in \R^d$ and spin coordinates $s_i\in\lbrace\uparrow,\downarrow\rbrace=\Z_2$ that are antisymmetric with respect to permutations of the electron coordinates,
\begin{equation} \label{eq:anti}
\Psi(z_{\sigma(1)},s_{\sigma(1)},\dots,z_{\sigma(N)},s_{\sigma(N)}) =
\operatorname{sign}(\sigma)\Psi(z_1,s_1,\dots,z_N,s_N), \quad \sigma
\in \Sym_N, 
\end{equation}
where $\Sym_N$ denotes the group of permutations of the indices $1,...,N$. The set of square-integrable $N$-electron wave functions, $\{\Psi\in L^2((\R^d\times\Z_2)^N;\mathbb{C}) \, : \, \eqref{eq:anti} \text{ holds }\}$, will be denoted $\bigwedge^N_{i=1}L^2(\R^d\times\Z_2;\mathbb{C})$.

The total energy $E^{\alpha\beta}[\Psi,v]$ of the fermonic state of the coupled system $\alpha\beta$ with external potential $v:\R^d\to\R$ is given, in atomic units, by
\begin{equation}\label{intro:energyE}
\quad E^{\alpha\beta}[\Psi,v] =  \Te[\Psi]+ \Vee[\Psi] + \Vne[\Psi,v],
\end{equation}
where $\Te[\Psi]$ is the \textit{kinetic energy}
\[
\Te[\Psi] = \dfrac{1}{2}\sum_{s_1 \in \Z_2} \int_{\R^3}\dots\sum_{s_{\na} \in \Z_2} \int_{\R^3}\sum^{N}_{i=1}\vert
\nabla\Psi(z_1,s_1\dots,z_N,s_N)\vert^2dz_1\dots dz_N,
\]
$\Vee[\Psi]$ is the Coulomb electronic-electronic interaction energy %to which we will refer also
%AUGUSTO: as dispersion
\[
\Vee[\Psi] = \sum_{s_1 \in \Z_2} \int_{\R^d}\dots\sum_{s_N \in \Z_2}
\int_{\R^d} \sum_{1\le i<j\le N}\dfrac{1}{\vert z_i-z_j\vert}\vert\Psi(z_1,s_1\dots,z_N,s_N)\vert^2dz_1\dots dz_N,
\]
and $\Vne[\Psi,v]$ is the electron-nuclei interaction energy
\[
\Vne[\Psi,v] = \sum_{s_1 \in \Z_2} \int_{\R^d}\dots\sum_{s_N \in \Z_2} \int_{\R^d}\sum^N_{i=1}v(z_i)\vert
\Psi(z_1,s_1\dots,z_N,s_N)\vert^2dz_1\dots dz_N.
\]
Typically, $v$ is the Coulomb potential generated by $M$ nuclei which are at positions $R_\nu$ and have charges $Z_\nu$,
\begin{equation}\label{eqn:typicalv}
v(z) = - \sum_{\nu=1}^M \frac{Z_\nu}{\vert z - R_\nu\vert}.
\end{equation}

The ground state energy is defined by
\begin{equation}\label{eq:GSEalphabeta}
E_0[v] = \min \left\lbrace E^{\alpha\beta}[\Psi,v] \, : \, \Psi\in L^2((\R^d\times\Z_2)^N;\mathbb{C}), \Vert \Psi\Vert_{2}= 1 \text{ and } \eqref{eq:anti} \text{ holds}  \right\rbrace.
\end{equation}

Similarly, we define the total energies $E^{\alpha},E^{\beta}$ of the isolated clusters of molecules $\alpha$ and $\beta$ 
\begin{equation}\label{eq:Energyalpha}
\quad E^{\alpha}[\Psi^{\alpha},v^{\alpha}] =  \Te^{\alpha}[\Psi^{\alpha}]+ \Vee[\Psi^{\alpha}] + \Vne[\Psi^{\alpha},v^{\alpha}], \text{ and }
\end{equation}
\begin{equation}\label{eq:Energyalpha}
\quad E^{\beta}[\Psi^{\beta},v^{\beta}] =  \Te^{\beta}[\Psi^{\beta}]+ \Vee[\Psi^{\beta}] + \Vne[\Psi^{\beta},v^{\beta}], 
\end{equation}
as well as their corresponding ground-state energies $E_0^{\alpha}$ and $E_0^{\beta}$
\begin{equation}\label{eq:GSEalpha}
E^{\alpha}_0[v^{\alpha}] = \min \left\lbrace E^{\alpha}[\Psi^{\alpha},v^{\alpha}] \, : \, \Psi\in L^2((\R^d\times\Z_2)^{N_\alpha};\C), \Vert \Psi^{\alpha}\Vert_{2}= 1 \text{ and } \eqref{eq:anti} \text{ holds}  \right\rbrace, \text{ and }
\end{equation}
\begin{equation}\label{eq:GSEbeta}
E^{\beta}_0[v^{\beta}] = \min \left\lbrace E^{\beta}[\Psi^{\beta},v^{\beta}] \, : \, \Psi\in L^2((\R^d\times\Z_2)^{N_\beta};\C), \Vert \Psi^{\beta}\Vert_{2}= 1 \text{ and } \eqref{eq:anti} \text{ holds}  \right\rbrace.
\end{equation}

\iffalse
where $\Te^{\eta}$ is the Kinetic energy of the cluster $\eta = \alpha,\beta$
\[
\Te^{\eta}[\Psi^{\eta}] = \dfrac{1}{2}\sum_{s_1 \in \Z_2} \int_{\R^3}\dots\sum_{s_{N_{\eta}} \in \Z_2} \int_{\R^3}\sum^{N_{\eta}}_{i=1}\vert
\nabla\Psi(x_1,s_1\dots,x_{N_{\eta}},s_N)\vert^2dx_1\dots dx_{\eta};
\]
$\Vee^{\eta}[\Psi^{\eta}]$ is the Coulomb electron-electron interaction energy of the cluster $\eta = \alpha,\beta$
\[
\Vee^{\eta}[\Psi^{\eta}] = \sum_{s_1 \in \Z_2} \int_{\R^d}\dots\sum_{s_{N_{\eta}} \in \Z_2}
\int_{\R^d} \sum_{1\le i<j\le N_{\eta}}\dfrac{1}{\vert x_i-x_j\vert}\vert\Psi(x_1,s_1\dots,x_{N_{\eta}},s_N)\vert^2dx_1\dots dx_{N_{\eta}},
\]
and $\Vne^{\eta}[\Psi^{\eta},v^{\eta}]$ is the electron-nuclei interaction energy of the cluster $\eta = \alpha,\beta$,
\[
\Vne^{\eta}[\Psi^{\eta},v^{\eta}] = \sum_{s_1 \in \Z_2} \int_{\R^d}\dots\sum_{s_N \in \Z_2} \int_{\R^d}\sum^{N_{\eta}}_{i=1}v(x_i)\vert
\Psi(x_1,s_1\dots,x_{N_\eta},s_N)\vert^2dx_1\dots dx_{N_{\eta}}.
\]
\fi
We are interested in the dissociation energy of the system $\alpha\beta$, i.e. the energy difference between the fully coupled system $\alpha\beta$ and the sum of the energies of the isolated systems $\alpha$ and $\beta$:
\begin{equation}\label{eq:dissen}
E_{\text{diss}} = E_0 - (E_0^{\alpha} + E_0^{\beta}).
\end{equation}

In the following, we will focus on the so-called \textit{Strong-Interaction limit of Density Functional Theory}, which was introduced in \cite{KooGor-JPCL-19}.

\noindent
\subsubsection*{The Strong Interaction limit of Density Functional Theory (DFT)} By integrating the $N$-point probability distribution over the spins, we obtain the $N$-point position density,  
\begin{equation}\label{eq:Nppd}
\pi_N^{\Psi}(z_1,\dots,z_N) := \sum_{s_1,\dots,s_N\in\Z_2} \vert \Psi(z_1,s_1,\dots,z_N,s_N)\vert^2, \quad \Psi \in \WN.
\end{equation}

\quad The single particle density $\rho_{\Psi}(z_j)$ is then obtained by integrating out all but one electron position $z_j\in\R^d$, 
\begin{equation}\label{eq:rho1}
\rho_{\Psi}(z_j) := \textcolor{black}{N}\int_{\R^{d(N-1)}}  \pi^{\Psi}_N(z_1,z_2,\dots, z_j,\dots,z_N) \prod_{i\neq j} dz_i, \quad ~\forall~ j \in \{1,...,N\}.
\end{equation}
We denote by $\Psi \mapsto \rho$ the relation between $\Psi$ and $\rho$ given by equations  \eqref{eq:Nppd}, \eqref{eq:rho1}. This means that the wave function $\Psi$ has single-electron density $\rho$.

\quad Following the work of Hohenberg and Kohn \cite{HohKoh-PR-64}, Levy \cite{Lev-PRA-82} and Lieb \cite{Lie-IJQC-83} showed that the electronic ground state problem \eqref{eq:GSEalphabeta} can be recast as a minimization over single-electron densities $\rho$ instead of many-electron wavefunctions $\Psi$: 
\begin{equation}\label{eq.groundstatedensity}
E_0[\vne^{\alpha\beta}] = \inf_{\rho \in \mathcal{D}^N} \bigg\lbrace F^{\alpha\beta}_{\rm LL}[\rho] +
N\int_{\R^d}\vne^{\alpha\beta}(r)\rho(r)dr \bigg\rbrace,
\end{equation}
with
\begin{equation}\label{eq.FHK}
F^{\alpha\beta}_{\rm LL}[\rho] = \min\bigg\lbrace \Te[\Psi] + \Vee[\Psi] : \Psi \in \WN, \Psi \mapsto \rho \bigg\rbrace, \medskip
\end{equation}
where $F_{\rm LL}[\rho]$ is the Levy-Lieb functional. The space $\mathcal{D}^N$ (characterized in \cite{Lie-IJQC-83}) is defined as the set of densities $\rho$ such that $\Psi\mapsto\rho$ for some $\Psi \in \WN$, 
\[
\WN = \left\lbrace \Psi \in \bigwedge_{i=1}^N H^1(\R^d\times\Z_2;\mathbb{C}) \, : \, \sum_{s_1,...,s_N\in\Z_2}\int_{\R^{dN}} \vert \nabla \Psi\vert^2 dz_1\dots dz_N  < +\infty, \; ||\Psi|| = 1  \right\rbrace.
\]
Analogously, when we replace $\rho, N$ by $\rho_\alpha, N_\alpha$ or by $\rho_\beta,N_\beta$, we obtain the ground state energies of the cluster of molecules $\alpha$ and $\beta$, which will be denoted, respectively, by

\begin{equation}\label{eq.groundstatedensity}
E^{\alpha}_0[\vne^{\alpha}] = \inf_{\ra \in \mathcal{D}^{\na}} \bigg\lbrace F^{\alpha}_{\rm LL}[\ra] +
\na\int_{\R^d}\vne^{\alpha}(r)\ra(r)dr \bigg\rbrace, \quad \text{ and }
\end{equation}
\begin{equation}\label{eq.groundstatedensity}
E^{\beta}_0[\vne^{\beta}] = \inf_{\rb \in \mathcal{D}^{\nb}} \bigg\lbrace F^{\beta}_{\rm LL}[\rb] +
\nb\int_{\R^d}\vne^{\beta}(r)\rb(r)dr \bigg\rbrace.\vspace{2mm}
\end{equation}

\noindent
\textbf{Strong Interaction limit of Density Functional Theory:} The Strong-Interaction limit functional is the limit when $\hbar\to 0^+$ of the Hohenberg-Kohn-Levy-Lieb functional \eqref{eq.FHK} \cite{SeiGorSav-PRA-07,CotFriKlu-CPAM-13,CotFriKlu-ARMA-18} (see also \cite{FriGerGor-arxiv-22,VucGerDaaBahFriGor-Wire-23} for a complete overview), and has the following form:
\begin{align}\label{eq.SIL}
    \VSCE[\rho] &:=\min_{\gamma\in \Pi(\rho)}\int_{\R^{dN}} \Vee(z_1,...,z_N) \, d\gamma(z_1,...,z_N),\\
    \text{where}\quad &\Vee(z_1,\dots,z_N):=\sum_{1\le i<j\le N}\dfrac{1}{\vert z_i-z_j\vert}\nonumber\\
    \text{and}\quad &\Pi(\rho):=\left\{\gamma\in{\cal P}_{sym}(\R^{dN}):\ \gamma\mapsto\rho\right\},\nonumber 
\end{align}
in which ${\cal P}_{sym}(\R^{dN})$ denotes the set of probability measures that are invariant under permutation of the $N$ coordinates in $\R^{dN}=(\R^d)^N$ and, analogously to \eqref{eq:rho1}, the notation $\gamma\mapsto\rho$ means that $\gamma$ has all marginals equal to $\rho$, i.e. $(e_i)_{\sharp}\gamma = \rho, \forall i \in \lbrace 1,\dots,N\rbrace$ with $e_i:\R^{dN}\to \R^d$ the projection operator. The equation \eqref{eq.SIL} corresponds to an optimal transport problem with finitely many marginals and Coulomb cost \cite{ButDepGor-PRA-12,Dep-ESAIMMMNA-15,CotFriKlu-CPAM-13,CotFriKlu-ARMA-18,DMaGerNen-TOOAS-17,GerKauRaj-ESAIMCOCV-19,GerKauRaj-CVPDE-20, bindini2017optimal, lewin2018semi}.

\subsection{A Dissociation model in the Strong Interaction limit of DFT}\label{sec:dispersionSCE}

Let $\eta>0$ be a positive number \mip{and} $\ra,\rb\in\mathcal{P}(\R^3)$ be two probability measures in $\R^3$. Define the probability measure $\rho_{\eta}$ by
\begin{align}\label{eq:rhoR}
    \rho_{\eta}(z)&:=\frac{\na}{\na+\nb}\ra (z-r_\a)+\frac{\nb}{\na+\nb}\rb(z-r_\b), \quad  \\  
&\text{ where } \, r_\a=\rae=\frac{1}{2\eta}\mathbf e_1 \quad \text{and} \quad r_\b=\rbe=-\frac{1}{2\eta}\mathbf e_1. \nonumber
\end{align}
 
The probability density $\rho_{\eta}$ is the single particle density of the composite system $\alpha\beta$ of $N=\na+\nb$ electrons formed by the isolated systems $\alpha$ and $\beta$ having single-particle densities given by, respectively, $\ra$ and $\rb$. This model captures the change of the single-particle density $\rho_{\eta}$ of the system $\alpha\beta$ on varying the distance $R=\eta^{-1}$ between the two clusters $\alpha$ and $\beta$.\vspace{2mm}

We are interested in the asymptotic development in $\eta\to 0^+$ of the SCE functional \eqref{eq.SIL} $\VSCE[\re]$.\vspace{2mm}

\noindent
{\bf Center-of-molecule coordinates.} As indicated by \eqref{eq:rhoR}, as $|\rae-\rbe|\to\infty$ the positions of the two molecules $\a,\b$ change, but the particle densities $\ra$ and $\rb$ do not change. It is then natural to work with new coordinates, denoted $x,y$, defined via
\begin{equation}\label{coordinates}
 x:=z-\rae, \qquad y:=z-\rbe.
\end{equation}
Then \eqref{eq:rhoR} can be written as
\begin{equation}\label{rhoR1}
 \rho_\eta(z):=\frac{\na}{\na+\nb}\ra(x)+\frac{\nb}{\na+\nb}\rb(y).
\end{equation}
We are now in the position to study the dissociation or electron-electron interaction of the entire cluster. As molecule $\a$ has $\na$ electrons and molecule $\b$ has $\nb$ electrons, it is natural to use center-of-molecule coordinates in which $\na$ of the positions $z_1,\dots, z_N$ are re-centered at $r_\alpha^{(\eta)}$ and the remaining $\nb$ are re-centered at $r_\beta^{(\eta)}$. With these choices, we rewrite the Coulomb electronic-electronic potential $\Vee$ as
\begin{align}\label{veta} 
    \Vee^\eta(\vec x,\vec y)&=\sum^{\na}_{i=1}\sum^{\nb}_{j=1}\dfrac{\eta}{\sqrt{1-2\eta(x^1_i-y^1_j)+\eta^2|x_i-y_j|^2}} + \Vee^{\alpha}(\vec{x}) + \Vee^{\beta}(\vec{y}), \\
    &\text{ where } \Vee^{\alpha}(\vec{x}) =\sum_{1\leq i<j\leq \na}\dfrac{1}{\vert x_i-x_j\vert}, \quad \text{and} \quad \Vee^{\beta}(\vec{y}) = \sum_{1\leq i<j\leq \nb}\dfrac{1}{\vert y_i-y_j\vert}.\nonumber\vspace{2mm}
\end{align}

\noindent
{\bf Plans that respect the molecule structure.} In order to model the dissociation via a simplified energy we introduce the simplified minimization in which $\na$ electrons are assigned to molecule $\a$ and $\nb$ electrons are assigned to molecule $\b$. Thus rather than admissible plan sets $\Pi(\re)$ (from \eqref{eq:rhoR}) we are led to consider the following spaces:
\begin{eqnarray*}
    \lefteqn{\Pi_{\na,\nb}^\eta(\ra,\rb)}\\
    &:=&
    \left\{\gamma\in \mathcal P(\R^{dN}):\ \exists \, \gamma_\a\in \Pi_\na(\ra),\gamma_\b \in \Pi_\nb(\rb)\mbox{ such that }\gamma=\left((\tau_{\rae})_\sharp \gamma_\a \otimes (\tau_{\rbe})_\sharp \gamma_\b\right)_{sym}\right\}.
\end{eqnarray*}
Here for $x\in\mathbb R^d$ we denote by $\tau_x:\mathbb R^{Kd}\to\mathbb R^{Kd}$ the translation operation $(x_1,\dots,x_K)\mapsto(x_1+x,\dots,x_K+x)$ and for a measure $\widetilde\gamma$ over $\mathbb R^{Nd}$ we set $(\widetilde \gamma)_{sym}:=\frac{1}{N!}\sum_{\sigma\in\Sym_n}\sigma_\sharp\widetilde\gamma$, where a permutation $\sigma$ acts by $(x_1,\dots,x_N)\mapsto(x_{\sigma(1)},\dots,x_{\sigma(N)})$.

In other words, for defining $\Pi_{\na,\nb}^\eta(\ra,\rb)$ we take $\gamma_\a,\gamma_\b$ with respectively $\na$ marginals equal to $\ra$ and $\nb$ marginals equal to $\rb$ and then we apply the translations by $\rae,\rbe$ to them. Finally, we symmetrize the so-obtained plan, an operation that insures direct comparability to the set of competitors for $\Pi(\re)$ from \eqref{eq.SIL}. Indeed, we have the following:
\begin{lemma}\label{restrictmarginal} Let $\eta>0$, $\rae,\rbe\in\R^d$, $\ra,\rb\in\prob(\R^d)$ and $\re$ as defined in \eqref{rhoR1}. Then, $\Pi_{\na,\nb}^\eta(\ra,\rb)\subset \Pi(\re)$.
\end{lemma}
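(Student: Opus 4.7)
\textbf{Proof plan for Lemma \ref{restrictmarginal}.}
The goal is to check, for an arbitrary $\gamma\in \Pi_{\na,\nb}^\eta(\ra,\rb)$, the two defining properties of $\Pi(\re)$: (i) symmetry under permutation of the $N=\na+\nb$ coordinates, and (ii) that all one-particle marginals coincide with $\re$ given by \eqref{rhoR1}. Symmetry (i) is free from the construction, since the symmetrization operator $(\cdot)_{sym}=\frac{1}{N!}\sum_{\sigma\in\Sym_N}\sigma_\sharp$ is, by definition, a projection onto symmetric measures. All the work is in (ii).

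The plan for (ii) is to first compute the marginals of the pre-symmetrized plan $\widetilde\gamma:=(\tau_{\rae})_\sharp\gamma_\a\otimes (\tau_{\rbe})_\sharp\gamma_\b$ coordinate by coordinate, and then to average. For $i\in\{1,\dots,\na\}$, the $i$-th marginal of $\widetilde\gamma$ equals $(\tau_{\rae})_\sharp$ applied to the $i$-th marginal of $\gamma_\a$, which is $\ra$ by definition of $\Pi_\na(\ra)$; hence this marginal is the translated density $\ra(\,\cdot\,-\rae)$. Symmetrically, for $i\in\{\na+1,\dots,N\}$ the $i$-th marginal of $\widetilde\gamma$ is $\rb(\,\cdot\,-\rbe)$. (Here I am using the translation convention \eqref{coordinates}, reading $\ra(\cdot-\rae)$ as the push-forward measure rather than a literal density shift, which is how $\re$ is written in \eqref{rhoR1}.)

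Next, I invoke the elementary fact that if $\mu$ is a probability on $\R^{dN}$ with $i$-th marginals $\mu_i$, then the first marginal of $\mu_{sym}=\frac{1}{N!}\sum_\sigma \sigma_\sharp\mu$ is $\frac{1}{N}\sum_{i=1}^N \mu_i$: this follows because pushing forward by a permutation $\sigma$ and then projecting on the first coordinate is the same as projecting on coordinate $\sigma^{-1}(1)$, and as $\sigma$ ranges over $\Sym_N$ each index $i$ is hit exactly $(N-1)!$ times. Applying this to $\widetilde\gamma$ gives
\begin{equation*}
(e_1)_\sharp \gamma = \frac{1}{N}\Bigl(\na\,(\tau_{\rae})_\sharp\ra+\nb\,(\tau_{\rbe})_\sharp\rb\Bigr) = \re,
\end{equation*}
exactly matching \eqref{rhoR1}. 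Because $\gamma$ is symmetric, the other marginals $(e_j)_\sharp\gamma$ agree with $(e_1)_\sharp\gamma$, so $\gamma\mapsto\re$ and hence $\gamma\in\Pi(\re)$.

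There is no real obstacle here; the only point that requires a small verification is the marginal-averaging identity for the symmetrization, which is the one step I would state explicitly rather than take for granted, since the rest is bookkeeping with translations and tensor products.
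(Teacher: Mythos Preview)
Your argument is correct and follows essentially the same route as the paper: compute the one-particle marginals of the symmetrized tensor product and recognize the weighted average $\frac{\na}{N}(\tau_{\rae})_\sharp\ra+\frac{\nb}{N}(\tau_{\rbe})_\sharp\rb=\re$. Your presentation is arguably cleaner in that you separate the two steps (marginals of $\widetilde\gamma$, then averaging under symmetrization) and you explicitly note the symmetry property required by $\Pi(\re)$; one harmless slip is that $e_1\circ\sigma=e_{\sigma(1)}$ rather than $e_{\sigma^{-1}(1)}$ under the paper's convention, but since you sum over all of $\Sym_N$ this does not affect the count.
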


\begin{proof}
Let $\gamma\in \Pi^{\eta}(\ra,\rb)$, $\gamma_{\a} \in \Pi_{N_\a}(\a)$ and let $\gamma_{\b}\in \Pi_{N_\b}(\b)$ be such that $\gamma = \left((\tau_{\rae})_{\sharp}(\gamma_\a)\otimes(\tau_{\rbe})_{\sharp}(\gamma_\b)\right)_{sym}$. Clearly, $\gamma \in \P(\R^d)$ and for every $i\in \{1,\dots,N\}$, we have
\begin{align*}
(e_i)_{\sharp}\gamma &= \left(e_i\right)_{\sharp}\left((\tau_{\rae})_{\sharp}(\gamma_\a)\otimes(\tau_{\rbe})_{\sharp}(\gamma_\b)\right)_{sym}\\
&= \left(e_i\right)_{\sharp}\left(\frac{1}{N!}\sum_{\sigma\in\Sym_N}\sigma_{\sharp}\left((\tau_{\rae})_{\sharp}(\gamma_\a)\otimes(\tau_{\rbe})_{\sharp}(\gamma_\b)\right)\right)\\
&= \left(e_i\right)_{\sharp}\left(\frac{1}{N!}\sum_{\sigma\in\Sym_N}\gamma_\a(z_{\sigma(1)}-\rae,\dots,z_{\sigma(N_\a)}-\rae)\gamma_\b(z_{\sigma(N_{\a}+1))}-\rbe,\dots,z_{\sigma(N_{\a}+N_{\b})}-\rbe)\right), \\
&= \dfrac{N_\a}{N_{\a}+N_{\b}}\ra(z_i-\rae) + \dfrac{N_\b}{N_{\a}+N_{\b}}\rb(z_i-\rbe) = \rho_{\eta}(z_i),
\end{align*}
where we denoted by $\Sym_N$ the set of permutations of $N$ elements.
\end{proof}
Note that $\Pi(\re)$ is strictly larger than $\Pi_{\na,\nb}^\eta(\ra,\rb)$. Nevertheless, for $\eta\to 0$ one may expect that optimizers of $\VSCE[\re]$ will tend to a minimizer of the restriction of the functional from $\VSCE$ to $\Pi_{\na,\nb}^\eta(\ra,\rb)$. This is because we do not allow electron tunneling over large distances of $1/\eta$, and thus the requirement of $\na,\nb$ electrons in the molecules becomes sharp in the limit. Note that in \cite{bouchitte2021dissociating, de2021relaxed} the tunneling question is left open. The question of proving the absence of tunneling rigorously is a technical one, and is not the focus of this paper. Instead, here, we use plans that respect the molecular structure and consider their asymptotic.

\medskip
We note that the energy $\Vee$ is symmetric under permutation of the $N$ coordinates of $\R^{dN}=(\R^d)^N$, thus we have
\[
    \int_{\R^{dN}}\Vee(\vec z)d(\gamma)_{sym}(z)=\int_{\R^{dN}}\Vee(\vec z) d\gamma(\vec z),
\]
therefore the problem we study can be reformulated as follows:
\begin{eqnarray}\label{eq:dispersion}
    \lefteqn{\min\left\{\int_{\R^{dN}}\Vee(\vec z)d\gamma(\vec z) :\ \gamma\in \Pi_{\na,\nb}^\eta(\ra,\rb)\right\}}\nonumber\\
    &=&\min\left\{\int_{\R^{dN}}\Vee(\vec z)d\left((\tau_{\rae})_\sharp \gamma_\a \otimes (\tau_{\rbe})_\sharp \gamma_\b\right)(\vec z) :\  \gamma_\a\in \Pi_\na(\ra),\gamma_\b \in \Pi_\nb(\rb)\right\}\nonumber\\
    &=&\min\left\{\int_{\R^{d\na}}\int_{\R^{d\nb}}\Vee^\eta(\vec x,\vec y)d\gamma_\a(\vec x)d\gamma_\b(\vec y):\ \gamma_\a\in \Pi_\na(\ra),\gamma_\b \in \Pi_\nb(\rb)\right\}\nonumber\\
    &=&\min_{\gamma\in\Pi_\na(\ra)\otimes\Pi_\nb(\rb)}\int_{\R^{dN}}\Vee^\eta(\vec x, \vec y)d\gamma(\vec x,\vec y).
\end{eqnarray}
To justify the above, in the first step we used the symmetry of $\Vee$ and in the second step we used the rewriting \eqref{veta} of $\Vee$ in center-of-molecule coordinates and the structure of the plans which respect the coordinate splitting from $(\vec z)$ to $(\vec x, \vec y)$.

\noindent
\textbf{Taylor expansion of the Coulomb interaction energy when $\eta\to 0^+$.} We now compute the asymptotic expansion of the Coulomb electronic-electronic interaction in \eqref{veta} when the distance $R=\eta^{-1}$ of the cluster of molecules $\alpha\beta$ goes to $+\infty$.

\begin{prop}\label{prop:tayloreta}
Let $\eta>0$ be a positive number, $N_A,N_B$ be positive integers, $\ra,\rb\in\mathcal{P}(\mathbb R^d)$ and $V^\eta(\vec x, \vec y)$ as in \eqref{veta}. Then, for $\gamma \in \Pi_\na(\ra)\otimes\Pi_\nb(\rb)$ we have
\begin{align}\label{eq:taylorveta}
\int_{\R^{dN}}V^{\eta}(\vec{x},\vec{y})d\gamma &= \int_{\R^{dN}}V^{\alpha}_{ee}(\vec{x})d\gamma_{\alpha}+\int_{\R^{dN}}V_{ee}^{\beta}(\vec{x})d\gamma_{\beta} + U^{\eta}_{\mathrm{int}}[\ra,\rb] + \\
&+\dfrac{1}{2}\eta^3\int_{\R^{dN}}\sum^{\na}_{i=1}\sum^{\nb}_{j=1}\left(3(x_i^{1}-y_j^{1})^2-\vert x_i-y_j\vert^2\right)d\gamma + O(\eta^4),
\end{align}
where $\gamma_\a\in \Pi_\na(\ra),\gamma_\b \in \Pi_\nb(\rb)$ and the internal energy $U^{\eta}_{\mathrm{int}}[\ra,\rb]$ is defined by
\begin{equation}\label{eq:uint}
    U^{\eta}_{\mathrm{int}}[\ra,\rb] := \na\nb\left(\eta + \left(\int_{\R^d}x^1d\ra(x) - \int_{\R^d}y^1d\rb(y)\right)\eta^2\right).
\end{equation}
\end{prop}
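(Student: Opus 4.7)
The plan is a direct Taylor expansion in $\eta$ of the cross-interaction kernel
\[
k_{ij}(\eta) := \frac{\eta}{\sqrt{1 - 2\eta(x_i^1 - y_j^1) + \eta^2|x_i - y_j|^2}}
\]
that appears in the first sum of \eqref{veta}, followed by term-by-term integration against $\gamma = \gamma_\a \otimes \gamma_\b \in \Pi_\na(\ra) \otimes \Pi_\nb(\rb)$, exploiting the marginal structure. Setting $a_{ij} := x_i^1 - y_j^1$ and $b_{ij} := |x_i - y_j|^2$, I would write $k_{ij}(\eta) = \eta (1 - u_{ij})^{-1/2}$ with $u_{ij} = 2\eta a_{ij} - \eta^2 b_{ij}$, and use the generalized binomial series $(1-u)^{-1/2} = 1 + \tfrac{u}{2} + \tfrac{3u^2}{8} + \tfrac{5u^3}{16} + O(u^4)$ to collect powers of $\eta$. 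A short calculation gives
\[
k_{ij}(\eta) = \eta + \eta^2 a_{ij} + \tfrac{\eta^3}{2}\bigl(3a_{ij}^2 - b_{ij}\bigr) + O(\eta^4).
\]

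Summing over $1\le i\le\na$ and $1\le j\le \nb$, the constant term yields $\na\nb\,\eta$, while the $\eta^2$ term yields $\eta^2 \sum_{i,j}(x_i^1 - y_j^1)$. Integrating this against $\gamma$ and using that $\gamma_\a \in \Pi_\na(\ra)$ forces $\int x_i^1 \, d\gamma_\a = \int x^1 d\ra$ for every $i$, and likewise $\int y_j^1 \, d\gamma_\b = \int y^1 d\rb$ for every $j$, collapses the double sum to $\na\nb\,\eta^2\bigl(\int x^1 d\ra - \int y^1 d\rb\bigr)$. Together with the $\eta$-term this reconstructs exactly $U^\eta_{\mathrm{int}}[\ra,\rb]$ from \eqref{eq:uint}. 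The $\eta^3$ contribution is left as the explicit cubic integral in \eqref{eq:taylorveta}, and the two intramolecular pieces $\Vee^\a(\vec x)$ and $\Vee^\b(\vec y)$ are inherited directly from the decomposition in \eqref{veta} (their integrals against $\gamma$ depend only on $\gamma_\a$ and $\gamma_\b$ respectively).

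The only nontrivial point is making the remainder $O(\eta^4)$ rigorous. The binomial series for $(1-u)^{-1/2}$ converges only for $|u|<1$, while $|u_{ij}|$ is controlled by $2\eta|a_{ij}| + \eta^2 b_{ij}$, which in turn depends on $|x_i|,|y_j|$. The cleanest route is to assume (or restrict attention to) compactly supported marginals $\ra,\rb$, so that on $\mathrm{spt}(\gamma_\a)\times\mathrm{spt}(\gamma_\b)$ there is a uniform bound $M$ on $|x_i|,|y_j|$; then for $\eta \le \eta_0(M)$ small enough one has $|u_{ij}| \le 1/2$ and the Lagrange form of the Taylor remainder gives $|k_{ij}(\eta) - (\text{first three terms})| \le C\eta^4(1 + |x_i|^4 + |y_j|^4)$, which is $\gamma$-integrable and of size $O(\eta^4)$. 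For general $\ra,\rb$ the same conclusion holds under a finite fourth-moment assumption on the marginals; this is where the bulk of the work lies, but it is routine once the expansion above is in place.
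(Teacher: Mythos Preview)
Your proposal is correct and follows essentially the same approach as the paper: Taylor-expand the cross term $\eta/\sqrt{1-2\eta(x_i^1-y_j^1)+\eta^2|x_i-y_j|^2}$ in $\eta$, sum over $i,j$, and integrate against $\gamma$, using the marginal constraints to collapse the $\eta$ and $\eta^2$ terms into $U^\eta_{\mathrm{int}}[\ra,\rb]$. Your treatment of the $O(\eta^4)$ remainder (via compact support or fourth-moment hypotheses) is in fact more careful than the paper's proof, which simply asserts the expansion without discussing convergence or integrability of the remainder.
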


\begin{proof}
We first compute the Taylor expansion of $V^{\eta}(\vec{x},\vec{y})$ at $\eta=0$:

\begin{align*}
    V^{\eta}(\vec{x},\vec{y}) &= V^{0}(\vec{x},\vec{y}) + \dfrac{dV}{d\eta}^{0}(\vec{x},\vec{y})\eta + \dfrac{1}{2}\dfrac{d^2V}{d\eta^2}^{0}(\vec{x},\vec{y})\eta^2 + O(\eta^3) \\
    &= \Vee^{\alpha}(\vec{x}) + \Vee^{\beta}(\vec{y}) + \na\nb\eta + \eta^2\sum^{\na}_{i=1}\sum^{\nb}_{j=1}(x_i^1-y_j^1) +\\
    &\qquad \qquad \qquad \qquad + \dfrac{1}{2}\eta^3\sum^{\na}_{i=1}\sum^{\nb}_{j=1}\left(3(x_i^1-y_j^1)^2-\vert x_i-y_j\vert^2\right) + O(\eta^3) 
    %&= U^{\eta}_{\textrm{int}}(\vec{x},\vec{y})+ \dfrac{1}{2}\eta^3\sum^{\na}_{i=1}\sum^{\nb}_{j=1}\left(3(x_i^1-y_j^1)^2-\vert x_i-y_j\vert^2) + O(\eta^3),\\
    %&= U^{\eta}_{\textrm{int}}(\vec{x},\vec{y})+ \dfrac{1}{2}\eta^3\sum^{\na}_{i=1}\sum^{\nb}_{j=1}\left(3(x_i^1-y_j^1)^2-\vert x_i-y_j\vert^2\right) + O(\eta^3),
\end{align*}
%where  
%\[
%U^{\eta}_{\textrm{int}}(\vec{x},\vec{y}) := \na\nb\eta + \eta^2\sum^{\na}_{i=1}\sum^{\nb}_{j=1}(x_i^1-y_j^1).
%\]
Let $\gamma\in\Pi_\na(\ra)\otimes\Pi_\nb(\rb)$. Then there exists  $\gamma_\a\in \Pi_\na(\ra)$ and $\gamma_\b \in \Pi_\nb(\rb)$ such that $\gamma=\left((\tau_{\rae})_\sharp \gamma_\a \otimes (\tau_{\rbe})_\sharp \gamma_\b\right)_{sym}$. Notice that, upon integrating the above expression against $\gamma$, the terms in the development of order $1$ and $\eta^2$ are given by
\[
\int_{\R^{dN}}V^{0}(\vec{x},\vec{y})d\gamma = \int_{\R^{dN}}V^{\alpha}_{ee}(\vec{x})d\gamma_{\alpha}(\vec{x})+\int_{\R^{dN}}V_{ee}^{\beta}(\vec{x})d\gamma_{\beta}(\vec{y}), 
\]
and
\[
\int_{\R^{dN}}\sum^{\na}_{i=1}\sum^{\nb}_{j=1}(x_i^1-y_j^1)d\gamma =\na\nb\left(\int_{\R^d}x^1d\ra(x) - \int_{\R^d}y^1d\rb(y)\right).
\]
Therefore, we directly obtain \eqref{eq:taylorveta} and \eqref{eq:uint}.
\end{proof}

\noindent
\section{Multi-marginal Optimal Transport problem}\label{sec:MOT}

In this section, we will present partial results on the existence and uniqueness of Monge solutions for the multi-marginal problem with cost function arising in the term of order $\eta^{3}$ in the expansion of $\eta\to 0^+$ in \eqref{eq:taylorveta}, namely
\begin{equation}\label{eq:MOTdis}
\min_{\gamma\in\Pi(\ra)\otimes\Pi(\rb)}\frac{1}{2}\int_{\R^{dN}}\sum^{\na}_{i=1}\sum^{\nb}_{j=1}\left(3(x_i^1-y_j^1)^2-\vert x_i-y_j\vert^2\right)d\gamma(x_1,\dots,x_\na,y_{1},\dots,y_\nb).
\end{equation}

The first theorem shows that if the marginal distributions $\ra$ and $\rb$ have finite second moments, then the problem \eqref{eq:MOTdis} is equivalent to the multi-marginal optimal transport problem introduced in \eqref{maincost}.

\begin{prop}
Let $\na,\nb \in \N$ be integers, $N=\na+\nb$, $X_i \subseteq \R^{d}$, $i=1, \ldots, N$ be open sets and $X:=\prod_{i=1}^{N}X_i$. If $\ra$ and $\rb$ are probability measures in $\R^d$ having finite moments, then, for any transport plan $\gamma\in\Pi_{\na}(\ra)\otimes\Pi_{\nb}(\rb)$, we have
\[
\frac{1}{2}\int_{X}\sum^{\na}_{i=1}\sum^{\nb}_{j=1}\left(3(x_i^1-y_j^1)^2-\vert x_i-y_j\vert^2\right)d\gamma = \int_{X}\sum^{\na}_{i=1}\sum^{\nb}_{j=1}x_i\cdot y_j^{*}d\gamma(x_1,\dots,x_\na,y_{1},\dots,y_\nb) + C.
\]
where $y_j^{*}=(-2y_j^{1}, y_j^{2}, \ldots, y_j^{d})$ for each $j\in \lbrace 1,\dots,\nb\rbrace$ and $C\in\R$ is a constant. In particular,
\[
\underset{\gamma\in\Pi_{\na}(\ra)\otimes\Pi_{\nb}(\rb)}{\mathsf{argmin}}\int_{X}\sum^{\na}_{i=1}\sum^{\nb}_{j=1}\left(3(x_i^1-y_j^1)^2-\vert x_i-y_j\vert^2\right)d\gamma = \underset{\gamma\in\Pi_{\na}(\ra)\otimes\Pi_{\nb}(\rb)}{\mathsf{argmin}}\int_{X}\sum^{\na}_{i=1}\sum^{\nb}_{j=1}x_i\cdot y_j^{*}d\gamma.
\]
\end{prop}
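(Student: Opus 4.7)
The plan is to reduce the equivalence to a pointwise algebraic identity, then integrate against $\gamma$ and exploit the marginal constraints to recognize that the "diagonal" contributions produce a constant that depends only on $\ra$ and $\rb$ (and is finite thanks to the second moment assumption). The $\mathsf{argmin}$ statement is then an immediate corollary, since shifting the objective by a constant does not change the set of minimizers.

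For the pointwise identity, I would first expand the integrand summand for fixed $i,j$: write $|x_i-y_j|^2=\sum_{k=1}^d (x_i^k-y_j^k)^2$ and compute
\begin{equation*}
3(x_i^1-y_j^1)^2-|x_i-y_j|^2 = \bigl[3(x_i^1)^2-|x_i|^2\bigr]+\bigl[3(y_j^1)^2-|y_j|^2\bigr]-4x_i^1 y_j^1 + 2\sum_{k=2}^d x_i^k y_j^k.
\end{equation*}
Setting $f(z):=3(z^1)^2-|z|^2$ and recalling $y_j^{*}=(-2y_j^1,y_j^2,\dots,y_j^d)$, the cross term is precisely $2\,x_i\cdot y_j^{*}$, so dividing by $2$ gives the clean splitting
\begin{equation*}
\tfrac{1}{2}\bigl(3(x_i^1-y_j^1)^2-|x_i-y_j|^2\bigr) = \tfrac{1}{2}f(x_i)+\tfrac{1}{2}f(y_j) + x_i\cdot y_j^{*}.
\end{equation*}

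Summing over $i=1,\dots,\na$ and $j=1,\dots,\nb$ produces a factor of $\nb$ in front of $\sum_i f(x_i)$ and a factor of $\na$ in front of $\sum_j f(y_j)$. Integrating against $\gamma\in\Pi_{\na}(\ra)\otimes\Pi_{\nb}(\rb)$ and using that each coordinate $x_i$ (resp.\ $y_j$) is distributed according to $\ra$ (resp.\ $\rb$) yields
\begin{equation*}
C \;=\; \frac{\na \nb}{2}\int_{\R^d} f(x)\,d\ra(x)+\frac{\na \nb}{2}\int_{\R^d} f(y)\,d\rb(y),
\end{equation*}
which is finite because $\ra$ and $\rb$ have finite second moments, and which is manifestly independent of $\gamma$. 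This establishes the displayed equality, and the equivalence of the two $\mathsf{argmin}$ problems follows at once because the two objective functionals differ by the $\gamma$-independent constant $C$.

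There is no real obstacle here, it is a bookkeeping computation; the only points where care is needed are (i) correctly identifying the combination $-4x_i^1 y_j^1 + 2\sum_{k\geq 2} x_i^k y_j^k$ as $2\,x_i\cdot y_j^{*}$ (so that the definition of $y_j^{*}$ is exactly matched, with no sign errors), and (ii) noting that the admissible set $\Pi_{\na}(\ra)\otimes\Pi_{\nb}(\rb)$ is defined so that all $\na$ marginals in the first block are $\ra$ and all $\nb$ marginals in the second block are $\rb$, which is what makes the combinatorial factor $\na\nb$ (not $\na^2$ or $\nb^2$) appear in $C$.
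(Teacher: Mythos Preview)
Your proof is correct and follows essentially the same approach as the paper: expand the quadratic terms, recognize the cross term as $2\,x_i\cdot y_j^{*}$, and observe that the remaining ``diagonal'' terms depend only on the marginals and hence integrate to a constant. Your presentation is in fact slightly more explicit than the paper's (which simply says ``expand the squares''), and your formula $C=\tfrac{\na\nb}{2}\int f\,d\ra+\tfrac{\na\nb}{2}\int f\,d\rb$ with $f(z)=3(z^1)^2-|z|^2$ correctly carries the factor $\tfrac12$ that the paper's displayed expression for $C$ appears to drop.
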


\begin{proof}

We just need to expand the squares in the cost function in \eqref{eq:MOTdis} and notice that for all $\gamma\in\Pi_{\na}(\ra)\otimes\Pi_{\nb}(\rb)$ we have
\begin{align*}
\frac{1}{2}\int_{X}\sum^{\na}_{i=1}\sum^{\nb}_{j=1}\left(3(x_i^1-y_j^1)^2-\vert x_i-y_j\vert^2\right)d\gamma &= \int_{X}- \sum^{\na}_{i=1}\sum^{\nb}_{j=1}3x_i^1y_j^1 + \sum^{\na}_{i=1}\sum^{\nb}_{j=1}x_i\cdot y_jd\gamma + C \\
&=\int_{X} \sum^{\na}_{i=1}\sum^{\nb}_{j=1}x_i\cdot y^*_jd\gamma + C,
\end{align*}
where $y_j^{*}=(-2y_j^{1}, y_j^{2}, \ldots, y_j^{d})$ for each $j\in \lbrace 1,\dots,\nb\rbrace$ and the constant $C$ is given by
\begin{align*}
C &= \int_{\R^{dN}} \sum^{\na}_{i=1}\sum^{\nb}_{j=1}3(x_i^1)^2 + \sum^{\na}_{i=1}\sum^{\nb}_{j=1}3(y_j^1)^2 - \sum^{\na}_{i=1}\sum^{\nb}_{j=1}x_i^2 - \sum^{\na}_{i=1}\sum^{\nb}_{j=1}y_j^2d\gamma \\
&=\na\nb\int_{\R^{d}} 3(x_i^1)^2d\ra+\na\nb\int_{\R^{d}} 3(y_j^1)^2d\rb - \na\nb\int_{\R^{d}} (x_i)^2d\ra-\na\nb\int_{\R^{d}} (y_j)^2d\rb.
\end{align*}
\end{proof}

Assuming finite second moments for the measures $\ra$ and $\rb $, the corresponding Kantorovich dual problem of \eqref{eq:MOTdis} is the variational problem below. Notice that the existence of the maximizer is guaranteed in G. H. Kellerer \cite{Kel-ZWG-84}.
\begin{equation*}
\max_{u\in L^1(\ra\otimes\rb)} \left\lbrace \na\int_{\R^{d\na}} u(x)d\ra(x) + \nb\int_{\R^{d\nb}} u(y)d\rb(y) \, : \,\sum^{\na}_{i=1}u(x_i)+\sum^{\nb}_{j=1}u(y_j)\leq \sum^{\na}_{i=1}\sum^{\nb}_{j=1}x_i\cdot y^*_j  \right\rbrace.
\end{equation*}

\subsection{Existence and uniqueness of Monge solutions}

 Let us now recall some main concepts from \cite{KimPass2014,PassVar2022}.
 \begin{deff}\label{splittingsets}
 Let $X_i \subseteq \R^{d}$, $i=1, \ldots, N$ be open sets, $X:=\prod_{i=1}^{N}X_i$ and $c:X\to\R$ be a cost function. A set $S\subseteq X$ is called a $c$-splitting set if there are Borel functions $u_i:X_i \mapsto \mathbb{R}$ such that 
\begin{equation}\label{Art3:6}
\sum_{i=1}^{N}u_i(x_{i})\leq c(x_1, \ldots,x_N)
\end{equation}
for every $(x_1, \ldots, x_N)\in X$, and whenever $(x_1, \ldots, x_N)\in S$ equality holds. The functions $u_1(x_1), \ldots, u_N(x_N)$ are called $c$-splitting functions for $S$.  
 \end{deff}
Assume $\{k_{i}\}_{i=1}^{r}\subseteq \{2, \ldots, N\}$, with $k_{1}< k_{2}<\ldots < k_{r}$. For a given $N$-tuple of Borel functions $(u_{1},\ldots, u_{N})$ satisfying inequality \eqref{Art3:6}, and $x_1^{0}\in X_1$, we define 

%\item Let $S\subseteq \prod_{i=1}^{N}X_i$ be a  $c$-splitting set and $(u_{1},\ldots, u_{N})$ an $m-$tuple  of c-splitting functions for $S$. Given $x_1^{0} \in \pi_{1}(S)$, where $\pi_{1}$ is the canonical projection from $\prod_{i=1}^{m}X_{i}$ to $X_1$, we define 
%\begin{align*}
%W_{x_1^{0}k_1\ldots k_r}(u_{1}, \ldots, u_{m},S)&:=\Big\{ (x_{2}, \ldots, x_{m})\in \prod_{i=2}^{m}X_i:  (x_{1}^{0},x_2, \ldots, x_{m})\in S \;\text{and}\\
%&\qquad \qquad \qquad\qquad \qquad \text{$Du_{k_{i}}(x_{k_{i}})$}\;\;\text{exists for each $i=1, \ldots r$}\Big\}.
%\end{align*}
% satisfying $u_1^{\prime}(x_1^{0})+\sum_{i=2}^{m}u_i^{\prime}(x_i)=c(x_1^{0}, x_2, \ldots,x_m)$ for some $(x_2, \ldots,x_m)\in \prod_{i=2}^{m}X_i$, we define
\begin{align*}
M_{x_1^{0}k_1\ldots k_r}(u_1, \ldots, u_N)&:=\Big\{ (x_{2}, \ldots, x_{N})\in \prod_{i=2}^{N}X_i: \text{$Du_{k_{i}}(x_{k_{i}})$ exists for each }\\
&\qquad \qquad\quad i=1,\ldots,r\;\;\text{and}\;u_1(x_1^{0})+\sum_{i=2}^{N}u_i(x_i)=c(x_1^{0}, x_2, \ldots,x_N)\Big\}.
\end{align*}

\begin{rmk}\label{Eqn:16}
Note that for a given set of the form $M_{x_1^{0}k_1\ldots k_r}(u_1, \ldots, u_N)$, if $Du_1(x_1^{0})$ exists then $D_{x_1}c(x_1^{0}, x_2, \ldots, x_N)$ exists for every $(x_2, \ldots, x_N)\in M_{x_1^{0}k_1\ldots k_r}(u_1, \ldots, u_N) $ and 
$$Du_1(x_1^{0})=D_{x_1}c(x_1^{0}, x_2, \ldots, x_N)$$ (for details see Lemma 2.2 in \cite{PassVar2022}). By definition of the set $M_{x_1^{0}k_1\ldots k_r}(u_1, \ldots, u_N)$ we also have $Du_{k_{i}}(x_{k_{i}})=D_{x_{k_{i}}}c(x_1^{0}, x_2, \ldots, x_N)$ for every $i=1, \ldots, r$.
\end{rmk}
\begin{deff}\label{Art3:10}
Let $X_i \subseteq \R^{d}$, $i=1, \ldots, N$ be open sets, $X:=\prod_{i=1}^{N}X_i$ and $c:X\to\R$ be a semi-concave cost function. Let $\{k_{i}\}_{i=1}^{r}\subseteq \{2, \ldots, N\}$, with $k_{1}< k_{2}<\ldots < k_{r}$. We say $c$ is twisted on $c$-splitting sets  with respect to the variables $x_{1}, x_{k_{1}},\ldots, x_{k_{r}}$  if 
for every $N$-tuple of Borel functions $(u_1,\ldots, u_N)$ satisfying inequality \eqref{Art3:6} and for every $x_1^{0}\in X_1$ with $M_{x_1^{0}k_1\ldots k_r}\neq \emptyset$, we get that the map 
$$(x_{2}, \ldots, x_{N})\mapsto D_{x_{1}}c(x_{1}^{0},x_2,\ldots, x_{N}) $$ is injective on
the subset of  $M_{x_1^{0}k_1\ldots k_r}$ where $D_{x_1}c(x_1^{0}, x_2, \ldots, x_N)$ exists. 
\end{deff}
Note that the special case of $c$ being twisted on $c$-splitting sets with respect to the variable $x_1$ is equivalent to  the twisted on $c$-splitting sets condition.
\begin{rmk}
%As shown in \cite{PassVar2022}(see Remark 2.4),
The main result in \cite{PassVar2022} states that if $c$ is twisted on $c$-splitting sets  with respect to the variables $x_{1}, x_{k_{1}},\ldots, x_{k_{r}}$, with  $\{k_{i}\}_{i=1}^{r}\subseteq \{2, \ldots, N\}$, $k_{1}< k_{2}<\ldots < k_{r}$, then the solution $\gamma$ in (\ref{KP}) is concentrated on a graph of a measurable map $u:\mathbb R^d\to \mathbb R^{rd}$ and it is unique, as long as $\rho_{1},\rho_{k_1}, \ldots,\rho_{k_r}$ are absolutely continuous with respect to the $d$-dimensional Lebesgue measure $\mathcal{L}^{d}$.\par

In this work, to get uniqueness we need the standard regularity condition on $\rho_1$. Under this assumption we can focus on the set formed of all $x_1^{0}\in X_1$ such that $Du_1(x_1^{0})$ exists for some $N$-tuple  $(u_1, \ldots, u_N)$ of Borel functions satisfying inequality \eqref{Art3:6}, and so if $M_{x_1^{0}k_1\ldots k_r}\neq \emptyset$, the map $$(x_{2}, \ldots, x_{N})\mapsto D_{x_{1}}c(x_{1}^{0},x_2,\ldots, x_{N}) $$ is injective on
the subset of  $M_{x_1^{0}k_1\ldots k_r}$ where $D_{x_1}c(x_1^{0}, x_2, \ldots, x_N)$ exists if and only if $M_{x_1^{0}k_1\ldots k_r}$ is a singleton. See details in \cite{PassVar2022}. % $$N:=\{x_1^{0}\in X_1: Du_1(x_1^{0})\quad \text{exists for some $N$-tuple  $(u_1, \ldots, u_N)$ of Borel functions satisfyinf inequality \eqref{Art3:6} }\}$$
\end{rmk}
% This condition is known as \textit{twist on $c$-splitting
%sets}, and establish that, rough whenever $\mu_1$ is absolutely
%continuous with respect to the $d$-dimensional Lebesgue measure $\mathcal{L}^{d}$, and c twisted o%n c-splitting sets,
%the solution to the Kantorovich problem is unique and induced by a graph.
\par

Our main result establishes that the cost in \eqref{maincost} provides unique Monge solutions under two regularity conditions. Note that in the  case $N_\alpha=1$ the cost function in \ref{maincost} reduces to the simple structure $x_1\cdot \sum_{j=1}^{N_{\beta}}y_j^{*}$, which clearly gives uniqueness if the first marginal $\rho_1$ is absolutely continuous with respect to the $d$-dimensional Lebesgue measure $\mathcal{L}^{d}$. In what follows we prove uniqueness for the case $N_{\alpha}>1$.

\begin{teo}\label{Eqn:10}
Let $\rho_i$ be Borel probability measures on open bounded sets $X_{i}\subseteq \mathbb{R}^{d}$, $i=1,\ldots, N$, with $\rho_1$ absolutely continuous with respect to the $d$-dimensional Lebesgue measure $\mathcal{L}^{d}$. Assume $N_{\alpha}>1$ and there exists $p\in \{1, \ldots, N_\beta\}$ such that $\rho_{N_{\alpha} +p}$ is absolutely continuous with respect to the Lebesgue measure $\mathcal{L}^{d}$.
Then the multi-marginal optimal transport problem \eqref{maincost} 
\[
\inf_{\gamma\in\Pi(\ra)\otimes\Pi(\rb)}\int_{\R^{dN}}c(\vec{x},\vec{y})d\gamma := \inf_{\gamma\in\Pi(\ra)\otimes\Pi(\rb)}\int_{\R^{d(\na+\nb)}}\sum^{\na}_{i=1}\sum^{\nb}_{j=1}x_i\cdot y_j^{*}d\gamma(x_1,\dots,x_\na,y_{1},\dots,y_\nb),
\]
admits a unique Monge solution. As before, $y_j^{*}=(-2y_j^{1}, y_j^{2}, \ldots, y_j^{d})$ for each $j\in \lbrace 1,\dots,\nb\rbrace$.
\end{teo}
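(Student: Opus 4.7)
The plan is to invoke the generalized twist-on-$c$-splitting-sets framework of Pass and Vargas-Jim\'enez \cite{PassVar2022}, with distinguished indices $1$ and $k_1 = \na + p$. Once the cost is shown to be twisted on $c$-splitting sets with respect to $x_1$ and $x_{\na+p}$, the absolute continuity of $\rho_1$ and $\rho_{\na+p}$ guarantees that the optimal Kantorovich plan is concentrated on a graph and is unique, from which the unique Monge solution follows by a standard argument. By the Remark following Definition~\ref{Art3:10}, verifying the twist condition---given absolute continuity of $\rho_1$---reduces to showing that for any admissible $N$-tuple $(u_1,\dots,u_N)$ of Borel potentials satisfying inequality~\eqref{Art3:6} and for $\rho_1$-a.e.\ $x_1^0$, the set $M_{x_1^0,\na+p}(u_1,\dots,u_N)$ is a singleton whenever non-empty.

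The structural observation is the bilinear factorization $c(\vec x,\vec y) = S\cdot T$, where $S := \sum_{i=1}^{\na} x_i$ and $T := \sum_{j=1}^{\nb} y_j^*$, which gives $D_{x_1}c = T$ and $D_{y_p}c = S^*$, both depending only on the ``collective'' variables. Therefore, at every point of $M_{x_1^0,\na+p}$, Remark~\ref{Eqn:16} applied to $u_1$ at $x_1^0$ fixes $T = T_0 := Du_1(x_1^0)$, and applied to $u_{\na+p}$ at $y_p$ fixes $S^* = Du_{\na+p}(y_p)$. Given two candidates $P, P' \in M_{x_1^0,\na+p}$, I would then run a \emph{coordinate-swap argument}: using bilinearity of $c$, one verifies that replacing a single $x_i$-coordinate ($i\ge 2$) in $P$ by its counterpart in $P'$ produces a tuple $Q$ that still saturates \eqref{Art3:6} (by comparing the sum of the equalities at $P$ and $P'$ to the sum of the inequalities at $Q$ and its mirror $Q'$) and still lies in $M_{x_1^0,\na+p}$ since the $y_p$-coordinate is unchanged. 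Evaluating the derivative identity at $y_p$ at both $P$ and $Q$ yields $S(P)^* = (S(P) + (x_i' - x_i))^*$, forcing $x_i = x_i'$; iterating over $i$ gives $x_i = x_i'$ for all $i\ge 2$, and hence $S(P) = S(P')$.

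The main remaining obstacle is to recover the individual $y_j$'s from the collective data $S$ and $T_0$. For this I would exploit that each potential $u_{\na+j}$ is \emph{concave}: since the $c$-transform of a bilinear cost expresses $u_{\na+j}$ as an infimum of functions affine in $y$, the saturation of \eqref{Art3:6} at $P$ forces each $y_j$ to be a maximizer of $y\mapsto u_{\na+j}(y) - S\cdot y^*$, whence $S^* \in \partial u_{\na+j}(y_j)$. By the Legendre correspondence and Rademacher's theorem applied to the convex conjugate $u_{\na+j}^\ast$, the subgradient inclusion $S^* \in \partial u_{\na+j}(y_j)$ has at most one solution $y_j$ for $\mathcal L^d$-a.e.\ value of $S^*$; combined with the absolute continuity of $\rho_1$ (propagating genericity to a $\rho_1$-full-measure set of $x_1^0$) and of $\rho_{\na+p}$, this forces $y_j = y_j'$ for every $j$, so $M_{x_1^0,\na+p}$ is a singleton. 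The most delicate step I anticipate is the measure-theoretic bookkeeping required to ensure that the various exceptional $\mathcal L^d$-null sets arising in the Legendre argument pull back to a $\rho_1$-null set of $x_1^0$'s, so that the singleton property holds simultaneously for all $j$ on a full-measure set; once this is handled, the Pass--Vargas-Jim\'enez theorem delivers the claimed unique Monge solution.
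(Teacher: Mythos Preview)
Your overall strategy matches the paper's: both invoke the Pass--Vargas-Jim\'enez framework with distinguished indices $1$ and $N_\alpha+p$ and reduce to showing $M_{x_1^0,N_\alpha+p}(u_1,\dots,u_N)$ is a singleton. Your coordinate-swap argument for the $x_i$'s is correct and is essentially the mirror of the paper's treatment of the $y_j$'s: in both cases one builds a swapped tuple, checks it still saturates \eqref{Art3:6} via the bilinear structure, and then reads off an equality from a derivative identity at a shared coordinate. (The paper does the $y$'s first and the $x$'s second; you reverse the order.)

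The gap is your second step. The Legendre--Rademacher argument shows that $S^*\in\partial u_{N_\alpha+j}(y_j)$ has at most one solution $y_j$ for $\mathcal L^d$-a.e.\ value of $S^*$, but nothing guarantees that the specific $S^*$ produced by a generic $x_1^0$ avoids the exceptional null set: the map $x_1^0\mapsto S^*$ factors through $y_p\mapsto Du_{N_\alpha+p}(y_p)$, and gradients of concave functions need not push absolutely continuous measures to absolutely continuous measures (if $u_{N_\alpha+p}$ is affine on a set of positive $\rho_\beta$-measure, $S^*$ is constant there, and that constant could lie in the bad set for some $j\neq p$). What you flag as ``bookkeeping'' is thus the substantive obstacle, and your outline does not resolve it. The paper sidesteps this entirely by using a second swap argument rather than Legendre duality: after the first step fixes the collective variable on one side, one swaps coordinates on the other side and applies the remaining derivative identity to pin down each individual coordinate, pointwise for every $x_1^0$ where $Du_1$ exists. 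In your setup this is especially clean: once you have $S(P)=S(P')$ (and hence $x_i=x_i'$), swap a single $y_j$ between $P$ and $P'$, verify saturation by the same bilinear cancellation you already used, and apply $Du_1(x_1^0)=D_{x_1}c$ to conclude $y_j=y_j'$. No concavity of the $u_k$ is needed, so the argument works for arbitrary Borel potentials as the twist condition in Definition~\ref{Art3:10} requires.
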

\begin{proof}
Let $(u_1,\ldots, u_N)$ be an $N$-tuple of Borel functions satisfying inequality \eqref{Art3:6} and fix 
 $x_1^{0}\in X_1$ such that $Du_1(x_1^{0})$ exists and $ M_{x_1^{0}(N_{\alpha}+p)}(u_1,\ldots, u_N) \neq \emptyset$. We want to prove that $M_{x_1^{0}(N_{\alpha}+p)}(u_1,\ldots, u_N)$ is a singleton. 
    \par 
    Let $(x_{2k},\ldots,x_{N_{\alpha}k}, y_{1k}, \ldots, y_{N_{\beta}k})\in M_{x_1^{0}(N_{\alpha}+p)}$, $k=1,2$. Then $c$ is differentiable with respect to $x_{1}$ at $(x_{1}^{0},x_{2k},\ldots,x_{N_{\alpha}k}, y_{1k}, \ldots, y_{N_{\beta}k})$ (see Remark \ref{Eqn:16}) and it satisfies
    \begin{equation}\label{Theorem 1.2-1}
        Du_{1}(x_{1}^{0})= D_{x_{1}}c(x_{1}^{0},x_{2k},\ldots,x_{N_{\alpha}k}, y_{1k}, \ldots, y_{N_{\beta}k})=\sum_{j=1}^{N_{\beta}}y_{jk}^{*}, \quad k=1,2.
    \end{equation}
    We will prove that $x_{i1}=x_{i2}$ and $y_{j1}=y_{j2}$ for each $i\in \{2, \ldots,N_{\alpha}\}$ and $j\in \{1, \ldots,N_{\beta}\}$.
    \par
    From equation \eqref{Theorem 1.2-1} we get 
    \begin{equation}\label{Eqn:1}
        \sum_{j=1}^{N_{\beta}}y_{j1}^{*}=\sum_{j=1}^{N_{\beta}}y_{j2}^{*}.
    \end{equation}
    Fix $s\in A:=\{1, \ldots, N_{\beta}\}$ and $k\in \{1,2\}$, and for convenience of notation set $x_1^{0}=x_{11}=x_{12}$. Note that
    \begin{align*}
        \left\lbrace y_{jk}\right\rbrace_{j\in A\setminus \{s\}}&\in\mathsf{argmin}\Bigg\{ \left\lbrace y_{j}\right\rbrace_{j\in A\setminus \{s\}} \mapsto c(x_{1}^{0},x_{2k}, \ldots,x_{N_{\alpha}k}, y_{1}, \ldots, y_{s-1}, y_{sk}, y_{s+1}, \ldots, y_{N_{\beta}})\\
        & \qquad\qquad \qquad-\sum_{i=1}^{N_{\alpha}}u_{i}(x_{ik}) - \sum_{\underset{j\neq s}{j=1}}^{N_{\beta}}u_{N_\alpha + j}(y_j)-u_{N_\alpha + s}(y_{sk})\Bigg\}\\
        &=\mathsf{argmin}\Bigg\{ \left\lbrace y_{j}\right\rbrace_{j\in A\setminus \{s\}} \mapsto c(x_{1}^{0},x_{2k}, \ldots,x_{N_{\alpha}k}, y_{1}, \ldots, y_{s-1}, y_{sk}, y_{s+1}, \ldots, y_{N_{\beta}})\\
    & \qquad\qquad \qquad-\sum_{\underset{j\neq s}{j=1}}^{N_{\beta}}u_{N_\alpha + j}(y_j)\Bigg\}\\
    & =\mathsf{argmin}\Bigg\{ \left\lbrace y_{j}\right\rbrace_{j\in A\setminus \{s\}} \mapsto  \sum_{i=1}^{N_{\alpha}}\sum_{\underset{j\neq s}{j=1}}^{N_{\beta}}x_{ik}\cdot y_{j}^{*}-\sum_{\underset{j\neq s}{j=1}}^{N_{\beta}}u_{N_\alpha + j}(y_j)\Bigg\}
    \end{align*}
    Then 
    \begin{equation}\label{Eqn:2}
        \sum_{i=1}^{N_{\alpha}}\sum_{\underset{j\neq s}{j=1}}^{N_{\beta}}x_{i2}\cdot y_{j2}^{*}-\sum_{\underset{j\neq s}{j=1}}^{N_{\beta}}u_{N_\alpha + j}(y_{j2})\leq \sum_{i=1}^{N_{\alpha}}\sum_{\underset{j\neq s}{j=1}}^{N_{\beta}}x_{i2}\cdot y_{j1}^{*}-\sum_{\underset{j\neq s}{j=1}}^{N_{\beta}}u_{N_\alpha + j}(y_{j1}),
    \end{equation}
    \[
        \sum_{i=1}^{N_{\alpha}}\sum_{\underset{j\neq s}{j=1}}^{N_{\beta}}x_{i1}\cdot y_{j1}^{*}-\sum_{\underset{j\neq s}{j=1}}^{N_{\beta}}u_{N_\alpha + j}(y_{j1})\leq \sum_{i=1}^{N_{\alpha}}\sum_{\underset{j\neq s}{j=1}}^{N_{\beta}}x_{i1}\cdot y_{j2}^{*}-\sum_{\underset{j\neq s}{j=1}}^{N_{\beta}}u_{N_\alpha + j}(y_{j2}).
    \]
    Adding the above inequalities and eliminating similar terms we get
    \[
        \sum_{i=1}^{N_{\alpha}}\sum_{\underset{j\neq s}{j=1}}^{N_{\beta}}x_{i2}\cdot y_{j2}^{*}+\sum_{i=1}^{N_{\alpha}}\sum_{\underset{j\neq s}{j=1}}^{N_{\beta}}x_{i1}\cdot y_{j1}^{*}\leq \sum_{i=1}^{N_{\alpha}}\sum_{\underset{j\neq s}{j=1}}^{N_{\beta}}x_{i2}\cdot y_{j1}^{*}+ \sum_{i=1}^{N_{\alpha}}\sum_{\underset{j\neq s}{j=1}}^{N_{\beta}}x_{i1}\cdot y_{j2}^{*},
    \]
    or equivalently,
    \begin{equation}\label{Eqn:12}
     \sum_{i=1}^{N_{\alpha}}\left(x_{i2}-x_{i1}\right)\cdot \sum_{\underset{j\neq s}{j=1}}^{N_{\beta}}\left(y_{j2}^{*}-y_{j1}^{*}\right)\leq 0.   
    \end{equation}
    On the other hand we have
    \begin{align*}
         y_{sk}&\in\mathsf{argmin}\Bigg\{ y_{s} \mapsto c(x_{1}^{0},x_{2k}, \ldots,x_{N_{\alpha}k}, y_{1k}, \ldots, y_{(s-1)k}, y_{s}, y_{(s+1)k}, \ldots, y_{N_{\beta}k})-u_{N_\alpha + s}(y_s)\Bigg\},
        \end{align*}
        hence, we also get
        \begin{equation}\label{Eqn:13}
         \sum_{i=1}^{N_{\alpha}}\left(x_{i2}-x_{i1}\right)\cdot \left(y_{s2}^{*}-y_{s1}^{*}\right)\leq 0.   
        \end{equation}   
    Adding inequalities \eqref{Eqn:12} and \eqref{Eqn:13} and using \eqref{Eqn:1} we conclude that in particular, equality holds in \eqref{Eqn:2}. Then
    \begin{align*}
      \sum_{i=1}^{N_{\alpha}}u_{i}(x_{i2}) + u_{N_\alpha + s}(y_{s2})-\sum_{i=1}^{N_{\alpha}}x_{i2}\cdot y_{s2}^{*}&=  \sum_{i=1}^{N_{\alpha}}\sum_{\underset{j\neq s}{j=1}}^{N_{\beta}}x_{i2}\cdot y_{j2}^{*}-\sum_{\underset{j\neq s}{j=1}}^{N_{\beta}}u_{N_\alpha + j}(y_{j2})\\
      &= \sum_{i=1}^{N_{\alpha}}\sum_{\underset{j\neq s}{j=1}}^{N_{\beta}}x_{i2}\cdot y_{j1}^{*}-\sum_{\underset{j\neq s}{j=1}}^{N_{\beta}}u_{N_\alpha + j}(y_{j1}).
    \end{align*}
    %This implies
   % \begin{align*}
     %   \left\lbrace y_{j1}\right\rbrace_{j\in A\setminus \{s\}} &\in\mathsf{argmin}\Bigg\{ \left\lbrace y_{j}\right\rbrace_{j\in A\setminus \{s\}} \mapsto c(x_{11},x_{22}, \ldots,x_{N_{\alpha}2}, y_{1}, \ldots, y_{s-1}, y_{s2}, y_{s+1}, \ldots, y_{N_{\beta}})\\
     %   & \qquad\qquad \qquad\qquad\qquad \qquad\qquad\qquad \qquad\qquad\qquad \qquad-\sum_{\underset{j\neq s}{j=1}}^{N_{\beta}}u_{N_\alpha + j}(y_j)\Bigg\},
   % \end{align*}
    $(x_{22}, \ldots,x_{N_{\alpha}2}, y_{11}, \ldots, y_{(s-1)1}, y_{s2}, y_{(s+1)1}, \ldots, y_{N_{\beta}1})\in M_{x_1^{0}(N_{\alpha}+p)}(u_1,\ldots, u_N)$. Then by Remark \ref{Eqn:16} we get
    \begin{align*}
        \sum_{\underset{j\neq s}{j=1}}^{N_{\beta}}y_{j1}^{*} + y_{s2}^{*}&= D_{x_{1}}c(x_1^{0}, x_{22}, \ldots,x_{N_{\alpha}2}, y_{11}, \ldots, y_{(s-1)1}, y_{s2}, y_{(s+1)1}, \ldots, y_{N_{\beta}1})\\
        &=Du_{1}(x_{1}^{0})\\
        & =D_{x_{1}}c(x_{1}^{0},x_{21},\ldots,x_{N_{\alpha}1}, y_{11}, \ldots, y_{N_{\beta}1})\\
        &=\sum_{j=1}^{N_{\beta}}y_{j1}^{*},
    \end{align*}
    which implies $y_{s2}^{*}=y_{s1}^{*}$; that is,
    \begin{equation}\label{Eqn:4}
        y_{s2}=y_{s1} \quad \text{for every $s\in \{1, \ldots,N_{\beta}\}.$}
    \end{equation}
    %If $N_{\alpha}=1$, then  $\gamma$ is clearly concentrated on a graph of a measurable map. This completes the proof of Part \eqref{part2}.

   % Assume there exists $p\in \{1, \ldots, N_\beta\}$ such that $\rho_{N_{\alpha} +p}$ is absolutely continuous with respect to $\mathcal{L}^{d}$ and $N_{\alpha}\geq 2$. Then $u_{N_{\alpha} +p}$ is differentiable $\rho_{N_{\alpha} +p}$ a.e and $\gamma \left(\widehat{\mathcal{M}}\right)=1$, where
   % \begin{align*}
       % \widehat{\mathcal{M}}&=\Big\{ (x_{1},\ldots,x_{N_{\alpha}}, y_1, \ldots, y_{N_{\beta}}): Du_{1}(x_{1})\quad\text{and}\quad %Du_{N_{\alpha} +p}(y_{p}) \quad\text{exists,}\quad \text{and}\\
       % & \quad\sum_{i=1}^{N_{\alpha}}u_{i}(x_{i}) + \sum_{j=1}^{N_{\beta}}u_{N_\alpha + j}(y_{j}) =c(x_{1},\ldots,x_{N_{\alpha}}, y_1, %\ldots, y_{N_{\beta}})\Big\}.
%\end{align*}
    In particular we have $y_{p1}=y_{p2}$ and by Remark \ref{Eqn:16}
    \begin{equation*}
        Du_{N_{\alpha} +p}(y_{p1})=Du_{N_{\alpha} +p}(y_{p2})= D_{y_{p}}c(x_{1}^{0},x_{2k},\ldots,x_{N_{\alpha}k}, y_{1k}, \ldots, y_{N_{\beta}k})=\sum_{i=1}^{N_{\alpha}}x_{ik}^{*},
    \end{equation*}
    where $x_{ik}^{*}=(-2x_{ik}^{1}, x_{ik}^{2}, \ldots, x_{ik}^{d})$ for each $i$, $k=1,2$.\par

    Using the above equation, we proceed to make a straightforward adaptation of the arguments
    used from Equation \eqref{Theorem 1.2-1} to Equation \eqref{Eqn:4} to prove that $x_{s1}=x_{s2}$ for every $s\in \{2, \ldots, N_{\alpha} \}$. This completes the proof of the theorem.
\end{proof}
Note that cost function in  \eqref{maincost} does not include interaction among the variables $x_1, \ldots, x_{N_\alpha}$, $N_{\alpha}\geq 2$. However, all of them interact with $y_j$ for any  $j \in \{1, \ldots \beta \}$.  So, it is natural to connect the variables  $x_1, \ldots, x_{N_\alpha}$ via one of the $y_j$, using  an extra regularity condition on $\rho_{N_{\alpha}+j}$. The following simple Lemma shows that this condition is necessary.
\begin{lemma}\label{Eqn:15}
Consider Dirac measures $\rho_{N_{\alpha}+j}= \delta_{\hat{y}_{j}}$, $j=1, \ldots, N_{\beta}$, and let $\rho_i$ be any non Dirac measure, for some $i\in \{2, \ldots, N_{\alpha}\}$. Then there exists a solution of non-Monge form to the Kantorovich problem.
\end{lemma}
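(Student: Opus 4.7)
The plan is to exploit the complete decoupling of the cost that occurs when all $\beta$-marginals collapse to Diracs. First, I would observe that every $\gamma\in\Pi(\rho_1,\ldots,\rho_N)$ is supported in $\{y_j=\hat y_j\text{ for all }j\}$, so on its support the cost reduces to
\[
\sum_{i=1}^{N_\alpha}\sum_{j=1}^{N_\beta} x_i\cdot \hat y_j^{*}=\sum_{i=1}^{N_\alpha} x_i\cdot \hat Y,\qquad \hat Y:=\sum_{j=1}^{N_\beta}\hat y_j^{*}.
\]
Since this is a sum of one-body terms in the $x_i$, its integral against any admissible $\gamma$ equals the constant $\hat Y\cdot\sum_{i=1}^{N_\alpha}\int x_i\,d\rho_i(x_i)$, which depends only on the prescribed marginals. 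Hence every element of $\Pi(\rho_1,\ldots,\rho_N)$ is a Kantorovich minimizer.

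Next, I would exhibit a non-Monge minimizer explicitly. Let $i^{*}\in\{2,\ldots,N_\alpha\}$ be the index for which $\rho_{i^{*}}$ is non-Dirac, and set
\[
\gamma^{*}:=\rho_1\otimes\cdots\otimes\rho_{N_\alpha}\otimes\delta_{\hat y_1}\otimes\cdots\otimes\delta_{\hat y_{N_\beta}}.
\]
Then $\gamma^{*}\in\Pi(\rho_1,\ldots,\rho_N)$, hence is optimal by the previous paragraph. Its projection onto the $(x_1,x_{i^{*}})$-coordinates is the product $\rho_1\otimes\rho_{i^{*}}$, whose disintegration with respect to the first variable is $\rho_{i^{*}}$ itself --- a non-Dirac measure. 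In particular this two-marginal is not concentrated on the graph of any Borel map $T_{i^{*}}:X_1\to X_{i^{*}}$. On the other hand, any Monge plan induced by maps $T_2,\ldots,T_N$ with $(T_i)_\sharp\rho_1=\rho_i$ must have its $(x_1,x_{i^{*}})$-marginal equal to $(\mathrm{id},T_{i^{*}})_{\sharp}\rho_1$, which is supported on a graph. Therefore $\gamma^{*}$ cannot arise from Monge maps, proving the lemma.

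There is essentially no obstacle: the point is that the Dirac collapse of the $\beta$-marginals severs every coupling between the $x$-variables, so non-uniqueness --- and in particular the presence of non-Monge optima --- is forced whenever some $\rho_{i^{*}}$ with $i^{*}\ge 2$ is non-Dirac. The only subtlety worth flagging is the final graph/disintegration step, which is isolated in the paragraph above and shows why the extra absolute continuity hypothesis on some $\rho_{N_\alpha+p}$ in Theorem \ref{Eqn:10} cannot be dispensed with.
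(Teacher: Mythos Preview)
Your proposal is correct and follows essentially the same route as the paper: both observe that the Dirac $\beta$-marginals reduce the cost to a sum of one-body terms in the $x_i$, so every admissible plan has the same total cost and is therefore optimal, and both then exhibit the product measure $\rho_1\otimes\cdots\otimes\rho_N$ as a non-Monge optimizer. Your version is slightly more explicit in justifying why the product is not of Monge form (via the $(x_1,x_{i^*})$-marginal and its disintegration), whereas the paper simply asserts it.
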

\begin{proof}
    For any transport plan $\gamma$ to the Kantorovich problem we have $\gamma=\overline{\gamma}\otimes \delta_{\hat{y}_{1}} \otimes \ldots \otimes\delta_{\hat{y}_{N_{\beta}}}$, where $\overline{\gamma}$ is a probability measure on $\prod_{i=1}^{N_{\alpha}}X_i$ with marginals $\rho_1, \ldots, \rho_{N_{\alpha}}$. Then
    \begin{align*}
        \displaystyle \int_{\prod_{i=1}^{N}X_i} cd\gamma & = \displaystyle \int_{\prod_{i=1}^{N_{\alpha}}X_i} c(x_1,\ldots, x_{N_{\alpha}}, \hat{y}_{1}, \ldots, \hat{y}_{N_{\beta}})   d\overline{\gamma}\\
        &= \sum_{i=1}^{N_{\alpha}} \sum_{j=1}^{N_{\beta}} \displaystyle \int_{\prod_{i=1}^{N_{\alpha}}X_i}x_i\cdot (\hat{y_j})^{*}d\overline{\gamma}\\
        &= \sum_{i=1}^{N_{\alpha}} \sum_{j=1}^{N_{\beta}} \displaystyle \int_{X_i}x_i\cdot (\hat{y_j})^{*}d\rho_{i}.
    \end{align*}
    We then deduce that any transport plan $\gamma$ is a solution. In particular, $\rho_1 \otimes \ldots \otimes \rho_N$ is solution of non-Monge form, as there exists $i\in \{2, \ldots, \beta \}$ with $\rho_i$ non Dirac measure.
\end{proof}
\section*{Acknowledgment}
AG and AVJ acknowledge support of our research by the Canada Research Chairs Program and Natural Sciences and Engineering Research Council of Canada. MP acknowledges support from CenIA (Centro Nacional de Inteligencia Artificial) and was funded by Chilean Fondecyt Regular grant n.1210462 entitled ``Rigidity, stability and uniformity for large point configurations'' and from the Chilean Centro Nacional de Inteligencia Artificial.  

\bibliographystyle{siam}

\begin{thebibliography}{10}

\bibitem{bindini2017optimal}
{\sc U.~Bindini and L.~De~Pascale}, {\em Optimal transport with {Coulomb} cost
  and the semiclassical limit of density functional theory}, Journal de
  l’{\'E}cole polytechnique—Math{\'e}matiques, 4 (2017), pp.~909--934.

\bibitem{bouchitte2021dissociating}
{\sc G.~Bouchitt{\'e}, G.~Buttazzo, T.~Champion, and L.~De~Pascale}, {\em
  Dissociating limit in density functional theory with {Coulomb} optimal
  transport cost}, {A}nnali {S}cuola {N}ormale {S}uperiore-{C}lasse di
  {S}cienze,  (2021), pp.~1429--1471.

\bibitem{Brenier89}
{\sc Y.~Brenier}, {\em The least action principle and the related concept of
  generalized flows for incompressible perfect fluids.}, J. Am. Math. Soc., 2
  (1989), pp.~225--255.

\bibitem{ButDepGor-PRA-12}
{\sc G.~Buttazzo, L.~De~Pascale, and P.~Gori-Giorgi}, {\em Optimal-transport
  formulation of electronic density-functional theory}, Phys. Rev. A, 85
  (2012), p.~062502.

\bibitem{Carlier2010}
{\sc G.~Carlier}, {\em On a class of multidimensional optimal transportation
  problems}, J. Convex Anal., 10 (2010), pp.~517--529.

\bibitem{champion2014twist}
{\sc T.~Champion and L.~de~Pascale}, {\em On the twist condition and c-monotone
  transport plans}, Discrete and Continuous Dynamical Systems-Series A, 34
  (2014), pp.~1339--1353.

\bibitem{CotFriKlu-CPAM-13}
{\sc C.~Cotar, G.~Friesecke, and C.~Kl\"uppelberg}, {\em Density functional
  theory and optimal transportation with {Coulomb} cost}, Comm. Pure Appl.
  Math., 66 (2013), pp.~548--99.

\bibitem{CotFriKlu-ARMA-18}
{\sc C.~Cotar, G.~Friesecke, and C.~Kl{\"u}ppelberg}, {\em Smoothing of
  transport plans with fixed marginals and rigorous semiclassical limit of the
  {H}ohenberg--{K}ohn functional}, Arch. Ration. Mech. An., 228 (2018),
  pp.~891--922.

\bibitem{Dep-ESAIMMMNA-15}
{\sc L.~De~Pascale}, {\em Optimal transport with {Coulomb} cost. approximation
  and duality}, ESAIM: Math. Model. Numer. Anal., 49 (2015), pp.~1643--1657.

\bibitem{de2021relaxed}
{\sc L.~De~Pascale, G.~Bouchitt{\'e}, G.~Buttazzo, and T.~Champion}, {\em
  Relaxed multi-marginal costs and quantization effects}, Annales de l'Institut
  Henri Poincar{\'e} C, 38 (2021), pp.~61--90.

\bibitem{DMaGerNen-TOOAS-17}
{\sc S.~Di~Marino, A.~Gerolin, and L.~Nenna}, {\em Optimal transport for
  repulsive costs}, Topological Optimization and Optimal Transport In the
  Applied Sciences,  (2017).

\bibitem{fathi2010optimal}
{\sc A.~Fathi and A.~Figalli}, {\em Optimal transportation on non-compact
  manifolds}, Israel Journal of Mathematics, 175 (2010), pp.~1--59.

\bibitem{FriGerGor-arxiv-22}
{\sc G.~Friesecke, A.~Gerolin, and P.~Gori-Giorgi}, {\em The strong-interaction
  limit of density functional theory}, arXiv preprint arXiv:2202.09760,
  (2022).

\bibitem{GanSwi-CPAM-98}
{\sc W.~Gangbo and A.~Swiech}, {\em Optimal maps for the multidimensional
  {Monge-Kantorovich} problem}, Commun. Pure Appl. Math., 51 (1998), p.~23.

\bibitem{GerThesis}
{\sc A.~Gerolin}, {\em Multi-marginal optimal transport and potential
  optimization problems for Schr{\"o}dinger operators}, PhD thesis,
  Universit{\`a} degli studi di Pisa, 2016.

\bibitem{GerKauRaj-ESAIMCOCV-19}
{\sc A.~Gerolin, A.~Kausamo, and T.~Rajala}, {\em Duality theory for
  multi-marginal optimal transport with repulsive costs in metric spaces},
  ESAIM: Control, Optimisation and Calculus of Variations, 25 (2019), p.~62.

\bibitem{GerKauRaj-SIAMJMA-19}
\leavevmode\vrule height 2pt depth -1.6pt width 23pt, {\em Non-existence of
  optimal transport maps for the multi-marginal repulsive harmonic cost}, SIAM
  Journal on Mathematical Analysis, 51 (2019).

\bibitem{GerKauRaj-CVPDE-20}
\leavevmode\vrule height 2pt depth -1.6pt width 23pt, {\em Multi-marginal
  {E}ntropy-{T}ransport with repulsive cost}, Calc. Var. PDEs, 59 (2020).

\bibitem{Hei-CRMASP-02}
{\sc H.~Heinich}, {\em Probl{\`e}me de {M}onge pour n probabilit{\'e}s}, C. R.
  Math. Acad. Sci. Paris, 334 (2002), pp.~793--795.

\bibitem{HohKoh-PR-64}
{\sc P.~Hohenberg and W.~Kohn}, {\em Inhomogeneous electron gas}, Phys. Rev.,
  {136} (1964), p.~B 864.

\bibitem{Kel-ZWG-84}
{\sc H.~G. Kellerer}, {\em Duality theorems for marginal problems}, Zeitschrift
  f{\"u}r Wahrscheinlichkeitstheorie und verwandte Gebiete, 67 (1984),
  pp.~399--432.

\bibitem{KimPass2014}
{\sc Y.-H. Kim and B.~Pass}, {\em A general condition for {Monge} solutions in
  the multi-marginal optimal transport problem}, SIAM J. Math. Anal., 46
  (2014), p.~1538–1550.

\bibitem{KimPass2015}
{\sc Y.-H. Kim and B.~Pass}, {\em Multi-marginal optimal transport on
  riemannian manifolds}, Amer. J. Math., 137 (2015), p.~1045–1060.

\bibitem{KooGor-JPCL-19}
{\sc D.~P. Kooi and P.~Gori-Giorgi}, {\em A variational approach to {London}
  dispersion interactions without density distortion}, The journal of physical
  chemistry letters, 10 (2019), pp.~1537--1541.

\bibitem{Lev-PRA-82}
{\sc M.~Levy}, {\em Electron densities in search of {Hamiltonians}}, Phys. Rev.
  A, 26 (1982), pp.~1200--1208.

\bibitem{lewin2018semi}
{\sc M.~Lewin}, {\em Semi-classical limit of the {L}evy--{L}ieb functional in
  density functional theory}, Comptes Rendus Mathematique, 356 (2018),
  pp.~449--455.

\bibitem{Lie-IJQC-83}
{\sc E.~H. Lieb}, {\em Density functionals for {CouIomb} systems}, Int. J.
  Quantum. Chem., 24 (1983), pp.~243--277.

\bibitem{McCann01}
{\sc R.~McCann}, {\em Polar factorization of maps on riemannian manifolds.},
  Geom. Funct. Anal., 11 (2001), pp.~589--608.

\bibitem{Pas-DCDS-14}
{\sc B.~Pass}, {\em Multi-marginal optimal transport and multi-agent matching
  problems: uniqueness and structure of solutions}, Discrete Contin. Dyn.
  Syst., 34:1623-1639,  (2014).

\bibitem{Pass2015}
{\sc B.~Pass}, {\em Uniqueness and {Monge} solutions in the multi-marginal
  optimal transportation problem}, SIAM J. Math. Anal., 43 (2015),
  pp.~2758--2775.

\bibitem{PassVar2021}
{\sc B.~Pass and A.~Vargas-Jimenez}, {\em Multi-marginal optimal transportation
  problem for cyclic costs}, SIAM J. Math. Anal., 53 (2021), p.~4386–4400.

\bibitem{PassVar2022}
{\sc B.~Pass and A.~Vargas-Jimenez.}, {\em Monge solutions and uniqueness in
  multi-marginal optimal transport: weaker conditions on the cost, stronger
  conditions on the marginals}, arXiv:2202.06783,  (2022).

\bibitem{PassVar2023}
{\sc B.~Pass and A.~Vargas-Jiménez}, {\em Monge solutions and uniqueness in
  multi-marginal optimal transport via graph theory}, To appeat in Advances in
  Mathematics, 428 (2023).

\bibitem{San-Book-15}
{\sc F.~Santambrogio}, {\em Optimal Transport for Applied Mathematicians},
  Progress in Nonlinear Differential Equations and Their Applications.,
  Birkh{\"a}user, 2015.

\bibitem{SeiGorSav-PRA-07}
{\sc M.~Seidl, P.~Gori-Giorgi, and A.~Savin}, {\em Strictly correlated
  electrons in density-functional theory: A general formulation with
  applications to spherical densities}, Phys. Rev. A, 75 (2007), p.~042511/12.

\bibitem{Vil-BOOK-03}
{\sc C.~Villani}, {\em Topics in Optimal Transportation}, Grad. Stud. Math.
  {\bf 58}, Amer. Math. Soc., Providence, 2003.

\bibitem{VucGerDaaBahFriGor-Wire-23}
{\sc S.~Vuckovic, A.~Gerolin, T.~J. Daas, H.~Bahmann, G.~Friesecke, and
  P.~Gori-Giorgi}, {\em Density functionals based on the mathematical structure
  of the strong-interaction limit of {DFT}}, Wiley Interdisciplinary Reviews:
  Computational Molecular Science, 13 (2023), p.~e1634.

\end{thebibliography}

\end{document}